\renewcommand\thesection{\Roman{section}}
\titleformat{\section}[block]{\large\scshape\centering}{\thesection.}{1em}{}
\let\start@align@nopar\start@align
\let\start@gather@nopar\start@gather
\let\start@multline@nopar\start@multline
\long\def\start@align{\par\start@align@nopar}
\long\def\start@gather{\par\start@gather@nopar}
\long\def\start@multline{\par\start@multline@nopar}
\newcommand{\SimpleTitle}[3]
{
\title{ \vspace{-12mm}%
	\fontsize{24pt}{10pt}\selectfont
	\textbf{#1}
}	
\author{%
	\large
	#2 \\[2mm]
	\normalsize	{${}^1$Computer Science Institute, Zhejiang University}\\
	\normalsize	{${}^2$Department of Respiratory Diseases, The First Affiliated Hospital, College of Medicine}\\
	\normalsize{ Zhejiang University}\\
	\normalsize	{\{#3\}}
	\vspace{-0mm}
}
\date{\today}
}
\thanks{Corresponding author.} \, 
\begin{document}
\maketitle
\thispagestyle{fancy}

\begin{abstract}
\noindent Alternating Direction Method of Multipliers (ADMM) is a popular method for solving large-scale Machine Learning problems. Stochastic ADMM was proposed to reduce the per iteration computational complexity, which is more suitable for big data problems. Recently, variance reduction techniques have been integrated with stochastic ADMM in order to get a faster convergence rate, such as SAG-ADMM and SVRG-ADMM. However, their convergence rate is still suboptimal w.r.t the smoothness constant. In this paper, we propose an accelerated  stochastic ADMM algorithm with variance reduction, which enjoys a faster convergence than all the existing stochastic ADMM algorithms. We theoretically analyse its convergence rate and show its dependence on the smoothness constant is optimal. We also empirically validate its effectiveness and show its priority over other stochastic ADMM algorithms.
\end{abstract}


\lettrine[nindent=0em,lines=2]{T}he Alternating Direction Method of Multipliers (ADMM),  firstly proposed by \clr{\cite{gabay1976dual,glowinski1975approximation}},  is an efficient and versatile tool, which can always guarantee good performance when handling large-scale data-distributed  or big-data related problems, due to its ability of dealing with the objective functions separately and synchronously.  Recent studies have shown that ADMM has a convergence rate of $O(1/N)$ (where $N$ is the number of iterations) for general convex problems \clr{\cite{monteiro2013iteration,he20121,he2012non}}.

Therefore, ADMM has been widely applied to solve machine learning and medical image processing problems in real world, such as \emph{lasso}, \emph{SVM} \clr{\cite{boyd2011distributed}},\emph{group lasso} \clr{\cite{meier2008group}}, \emph{graph guided SVM} \clr{\cite{ouyang2013stochastic}}, and \emph{top ranking} \clr{\cite{kadkhodaie2015accelerated}}. All these  can be easily casted into an Empirical Risk Minimization(ERM) framework. That is
\begin{equation} \label{eq:P}
\begin{aligned}
& \min_{x,y}
& & \frac{1}{n} \sum_{i=1}^{n} f_i(x) + g(y) \\
& \text{subject to}
& & Ax +By = z,
\end{aligned}
\end{equation}

where $\frac{1}{n} \sum_{i=1}^{n} f_i(x)$  is a loss function, each $f_i(x): \RBB^d \rightarrow \RBB$  is a convex component,
and $g(y): \RBB^m \rightarrow \RBB $ is a convex regularizor.
For example, given data samples $\{(w_i,b_i)\}_{i=1}^n$ where $w_i \in \RBB^d$ and $b_i \in \RBB$,
the group lasso problem can be reformulated as
\begin{equation} \nonumber
\begin{aligned}
& \min_{x,y}
& & \frac{1}{n}\sum_{i=1}^n\frac{1}{2}(w_i^T x - b_i)^2 + \nu\|y\|_1 \\
& \text{subject to}
& & Gx = y,
\end{aligned}
\end{equation}
where $G$ is the matrix encoding the group information.

However, in the ERM problem, when the size of the training dataset is large, the minimization procedure in each iteration can be computationally expensive, since it needs to access the whole training set. Many researchers resorted to the increment learning techniques to address such issue. Stochastic ADMM algorithm were proposed \clr{\cite{wang2012online,ouyang2013stochastic,suzuki2013dual}},
though they only have a suboptimal convergence rate $O(1/\sqrt{N})$. Recently, variance reduction techniques have been integrated with stochastic ADMM in order to get a faster convergence rate, such as SAG-ADMM \clr{\cite{zhong2014fast}} and SVRG-ADMM \clr{\cite{zheng2016fast}}. They all achieve a $O(1/N)$ convergence rate when $f(x)$ is  both convex and smooth, and $g(x)$ is convex.

At the same time, a few researchers tried to accelerate the traditional ADMM method by adding a momentum to the original solvers \clr{\cite{goldfarb2013fast,kadkhodaie2015accelerated}}. An important work is the AL-ADMM algorithm proposed by \clr{\cite{ouyang2015accelerated}}, which improves the rate of convergence from $O(1/N)$ to $O(1/N^2)$ in terms of its dependence on the smoothness constant $L_f$ of $f(x)$.

Inspired by these two lines of research, we propose an accelerated stochastic ADMM algaorithm called \emph{Accelerated SVRG-ADMM} (ASVRG-ADMM), which incorporates both the variance reduction and the acceleration technique together. By utilizing a set of auxiliary points, we speed up the original SVRG-ADMM algorithm without introducing much extra computation. We  theoretically analyze the convergence rate of ASVRG-ADMM and show it is $O(1/N^2)$ for the smoothness constant \footnote{The overall convergence is $O(\frac{aL_f}{N^2}+\frac{b}{N})$, where $aL_f$ will dominate $b$ in most ERM problems, since big dataset means big smoothness constant.}. Experimental results also show the proposed algorithm outperforms other stochastic ADMM algorithms in the big data settings.

\section{Notation and Preliminaries}
For a vector $x$, $\|x\|$ denotes the $\ell_2$-norm of $x$, and $\|x\|_1$ denotes the $\ell_1$-norm. For a matrix $X$, $\|X\|_2$ denotes its spectral norm. For a random variable $a$, we use $\EBB$($a$) to denote its expectation and $\VBB$($a$) $= \EBB^2(\|a\|) - \EBB(\|a\|^2)$ to denote its variance. For a function $h(x)$, we denote its gradient and subgradient as $\nabla h$ and $\partial h(x)$ respectively.  A function $h(x)$ is $L$-smooth if it is differentiable and $\|h'(x)- h'(y)\| \leq L\|x-y\|$, or equivalently,
\[
h(x) \le h(y) +\langle \nabla h(y), x-y\rangle + \frac{L}{2} \| x-y \|^2.
\]
We call $L$ the \emph{smoothness constant}.

For the convenience of notation, we denote $f(x) = \frac{1}{n}\sum_{i=1}^n f_i(x)$, $u=(x,y)$, and $w = (x,y,\lambda)$. We assume that the effective domain $\XM$ of $x$ and $\YM$ of $y$ are bounded, that is
\begin{align*}
D_{x^*} &= \sup_{x \in \XM} \|x-  x^*\|, \\
D_{y^*,B} &=\sup_{y \in \YM} \|B(y-y_*)\|, \textrm{and}\\
D_{X,A} &= \sup_{x_a,x_b \in \XM}\|A(x_a-  x_b)\|
\end{align*}
exists and not equal to infinity, where $A$ and $B$ are matrices in the constraint of (\ref{eq:P}). We also assume that the optimal solution $u^*=(x^*,y^*)$ of problem (\ref{eq:P}) and the optimal of each $f_i$ exists.

Similar to those in \clr{\cite{ouyang2015accelerated}}, we denote the gap function as follows.
\begin{dfn}[Gap Function]
For any $w = (x,y,\lambda)$ and $\bar{w} = (\bar{x},\bar{y},\bar{\lambda})$, we define
\begin{align*}
Q(\bar{x},\bar{y},\bar{\lambda};x,y,\lambda)&=
[f(x)+g(y)+\langle \bar{\lambda},Ax + By -c\rangle]
\\
&- [f(\bar{x})+g(\bar{y})+\langle \lambda,A\bar{x} + B\bar{y} -c\rangle].
\end{align*}
\end{dfn}
For the simplicity of notation, we also write $Q(\bar{w};w)=Q(\bar{x},\bar{y},\bar{\lambda};x,y,\lambda)$.

\section{Related Work}
Here we focus on the constraint optimization problem
\begin{equation}\label{eq:form}
\min_{x,y} f(x) + g(y), \quad \textrm{ s.t.} \quad  Ax + By =z,
\end{equation}
where $f(x)$ is both convex and  $L_f-$smooth, and $g(y)$ is convex. Besides, we also assume that each $f_i$ is $L_i$- smooth.

To solve  problem (\ref{eq:form}), ADMM starts with the augmented Lagrangian of the original problem:
\begin{align*}
 L_{\beta} (x,y,\lambda) &:= f(x) + g(y) -\langle\lambda,Ax+By-z\rangle \\
 &+ \frac{\beta}{2}\|Ax+By-z\|^2,
\end{align*}
where $\beta > 0$ is a constant, and $\lambda $ is the vector of Lagrangian multipliers. In each round, ADMM minimizes $L_\beta$ with respect to the variables $x$ and $y$ alternatively given the other fixed, followed by an update of the vector $\lambda$.
Specifically, it updates $x, y$ and $\lambda$ as follows.
\begin{align*}
x_{t} &= \textrm{argmin} L_\beta(x,y_{t-1},\lambda_{t-1}),\\
y_{t} &= \textrm{argmin} L_\beta(x_{t},y,\lambda_{t-1}),\\
\lambda_{t}  &= \lambda_{t} - \beta(A x_{t} + B y_{t} - c).
\end{align*}

In order to avoid the access to the whole dataset in the $x$ update and further
reduce the per-iteration computation complexity,
stochastic ADMM algorithm(SADMM) was proposed by \clr{\cite{ouyang2013stochastic}}.
However, their method can only achieve a suboptimal convergence rate $O(\frac{L_f D_{x^*}^2}{N}+\frac{c}{N}+\frac{D_{x^*}\sigma}{\sqrt{N}})$,
where $c$ is a constant that depends on $D_{x^*},D_{y^*,B}$ and $D_{X,A}$, and  $\sigma$ is the upper bound of the variance of stochastic gradients.
Typically, $\sigma$ and $L_fD_{x^*}^2$ can be rather large and $c$ is small compared to them.
Based on \clr{\cite{ouyang2013stochastic}},
\clr{\cite{azadi2014towards}} proposed an accelerated stochastic ADMM algorithm(ASADMM) to reduce the dependence of $L_f$,
which has a $O(\frac{L_f D_{x^*}^2}{N^2}+\frac{c}{N}+\frac{D_{x^*}\sigma}{\sqrt{N}})$ convergence rate.
The $O(\frac{L_f D_{x^*}^2}{N^2})$ is optimal w.r.t. the smoothness constant $L_f$,
but due to the high variance of stochastic gradients, their method is far away from optimal.

Borrowed idea from variance reduction techniques used in stochastic gradient descent literature,
SAG-ADMM \clr{\cite{zhong2014fast}} and SVRG-ADMM \clr{\cite{zheng2016fast}} are recently proposed.
These methods enjoy similar convergence rate $O(\frac{L_fD_{x^*}^2}{N}+\frac{c_1}{N})$, where $c_1$ is a constant which depends on $D_{x^*}$, $D_{y^*}$ , $D_{X,A}$,$\|A\|_2$ and $\|B\|_2$, and can be rather small compared to $L_fD_{x^*}^2$ (different algorithms may have different $c_1$).

Recently, acceleration technique has been integrated with variance reduction in stochastic gradient descent literature \clr{\cite{hien2016accelerated,allen2016katyusha}},
which improves the convergence rate from $O(1/N)$ to $O(1/N^2)$,
but it is still not clear how to combine this two techniques in stochastic ADMM literature to get a more effective method.

\section{ASVRG-ADMM Algorithm}
In this section, we first propose an general ASVRG-ADMM
framework for solving (\ref{eq:form}), and then analyze relation between the convergence rate and the parameter settings.
Finally, we conclude our Accelerated SVRG-ADMM algorithm by specifying the particular parameter setting.
The proposed framework is presented in Algorithm \ref{alg:asvrgadmm}.

\IncMargin{1em}

\begin{algorithm}[H]\label{alg:asvrgadmm}
\BlankLine
Initialization : $\tilde{x}_{0}=x_{m,0},\tilde{y}_0 =y_{m,0},\tilde{\lambda}_0 =\lambda_{m,0} $  such that $A\tilde{x}_0 + B \tilde{y}_0 -c =0$

    \For{$ s= 1,2\cdots N$}
    {
        Update $\alpha_{1,s},\alpha_{2,s},\alpha_{3,s},\theta_{s}$ and $\rho_{s}$\;
        $x_{0,s} = x_{m,s-1}$;
        $y_{0,s} = y_{m,s-1}$;
        $\lambda_{0,s} = \lambda_{m,s-1}$\;
        $x_{0,s} ^{ag}= x_{m,s-1}^{ag}$;
        $y_{0,s}^{ag} = y_{m,s-1}^{ag}$;
        $\lambda_{0,s}^{ag} = \lambda_{m,s-1}^{ag}$\;
        $\tilde{v}_s = \frac{1}{n}\sum_{i=1}^n \nabla f_i(\tilde{x}_{s-1})$\;

        \For{$t = 1,2,\cdots,m$}
        {
            $x_{t,s}^{md} = \alpha_{1,s} x_{t-1,s}^{ag} + \alpha_{2,s} x_{t-1,s} + \alpha_{3,s} \tilde{x}_{s-1}$\;
            Sample $i_t$ uniformly from $\{1,2,\cdots,n\}$\;
            $v_{t,s} = \nabla f_{i_t}(x_{t,s}^{md}) -\nabla f_{i_t}(\tilde{x}_{s-1}) + \tilde{v}_s$\;
           $x_{t,s} = \textrm{argmin}_x \bar L_{t,s}(x,y_{t-1,s},\lambda_{t-1,s},\chi)$\;
            $x_{t,s}^{ag} = \alpha_{1,s} x_{t-1,s}^{ag} + \alpha_{2,s} x_{t,s} + \alpha_{3,s} \tilde{x}_{s-1}$\;
            $y_{t,s} = \textrm{argmin}_y  g(y) + \langle \lambda_{t-1,s}, A x_{t,s} + By -c \rangle+\frac{\theta_{s}}{2}\|A x_{t,s} + B y -c\|^2$\;
            $y_{t,s}^{ag} = \alpha_{1,s} y_{t-1,s}^{ag} + \alpha_{2,s} y_{t,s} + \alpha_{3,s} \tilde{y}_{s-1}$\;
            $\lambda_{t,s} = \lambda_{t-1,s} + \rho_{s} (A x_{t,s} + B y_{t,s} -c)$\;
            $\lambda_{t,s}^{ag} = \alpha_{1,s} \lambda_{t-1,s}^{ag} + \alpha_{2,s} \lambda_{t,s} + \alpha_{3,s} \tilde{\lambda}_{s-1}$\;
        }
        $\tilde{x}_s = \frac{1}{m} \sum_{t=1}^m x_{t,s}^{ag}$;
        $\tilde{y}_s = \frac{1}{m} \sum_{t=1}^m y_{t,s}^{ag}$\;
        $\tilde{\lambda}_s = \frac{1}{m} \sum_{t=1}^m \lambda_{t,s}^{ag}$\;

    }
    \BlankLine
\SetKwInOut{Output}{Output}
\Output {$\hat{w}_N = \frac{1}{1+\alpha_{3,N+1}m}w_{m,N}^{ag}+\frac{\alpha_{3,N+1}m}{1+\alpha_{3,N+1}m}\tilde{w}_{N}$
}
\caption{ASVRG-ADMM Framework}
\end{algorithm}
\DecMargin{1em}

In this framework,
the updates of $x$ and $y$ are actually alternatingly minimizing the weighted linearized augmented Lagrangian:

\begin{align*}
\bar{L}_{t,s} (x,y,\lambda_{t,s},\chi) &= f(x_{t-1,s})\\
&+\langle v_{t,s} ,x\rangle +g(y)+
\langle \lambda_{t-1,s} ,Ax +By-c\rangle\\
 &+\chi \theta_{s}  \langle A x ,A x _{t-1,s} + B y_{t-1,s} -c\rangle
\\& +\frac{(1-\chi)\theta_{s}}{2} \|A x + B y_{t-1,s} -c\|^2\\
&+\frac{\eta_{s}}{2}\|x - x_{t-1,s}\|^2,
\end{align*}
where $f(x)$ in the original augmented lagrangian $L_{\beta}(x,y,\lambda)$ is linearized as $ f(x_{t-1,s})+\langle v_{t,s} ,x\rangle +\frac{\eta_{s}}{2}\|x - x_{t-1,s}\|^2_2$ and a weight parameter $\theta_s$ is added to the constraint measure $\frac{\|Ax+By-c\|^2_2}{2}$.
However, we must notice that $v_{t,s}$ is an unbiased estimation of $\nabla f(x_{t,s}^{md})$ instead of $\nabla f(x_{t,s})$.
Here,$\chi$ is an indicator variable that is either 0 or 1.
If $\chi = 0$, augmented term $\|Ax+By_{t-1,s}-c\|^2$ is preserved ,
and the update in line (10) of Algorithm~\ref{alg:asvrgadmm} is

\begin{align}
x_{t,s} &= (\eta_s I + \theta_s A^T A)^{-1}(\eta_s x_{t-1,s} - v_{t,s}
\nonumber \nonumber\\
&-A^T\lambda_{t-1,s}-\theta_{s}A^T(By_{t-1,s}-c))\label{xupdate:non}.
\end{align}
Since every update in (\ref{xupdate:non}) involves a matrix inverse,
which can be quite complicated when $A$ is large,
we can set $\chi = 1$ to linearize this part to simplify the computation.
When $\chi = 1$,
the augmented term $\|Ax+By-c\|^2$ in $\bar{L}_{t,s}(x,y,\lambda_{t,s})$ is  linearized as $\langle Ax_{t-1,s} + By_{t-1,s}-c, Ax \rangle$ in the $t$-th iteration,
and the update in line (10) of Algorithm~\ref{alg:asvrgadmm} is simplified as

\begin{align}
x_{t,s} &= x_{t-1,s} - \frac{1}{\eta_s} (A^T\lambda_{t-1.s}+v_{t,s}
\nonumber\\
&+\theta_{s}A^T(Ax_{t-1,s}+By_{t-1,s}-c))\label{xupdate:lin}.
\end{align}

Similar to other accelerated methods,
we construct a set of auxiliary sequences  $\{x^{md}\}$ , $\{x^{ag}\}$ ,$\{y^{ag}\}$ and $\{\lambda^{ag}\}$ .
Here the superscript "ag" stands for "aggregate",
and "md" stand for "middle".
We can see that all $\{x_{t,s}^{md}\}$ and $\{x_{t,s}^{ag}\}$ are actually weighted sums of all the previous $\{x_{i,k}\}$ and all $\{y_{t,s}^{ag}\}$
and $\{\lambda_{t,s}^{ag}\}$ are weighted sums of previous $\{y_{i,k}\}$ and $\{\lambda_{i,k}\}$,respectively.
We require all the weight parameters $\alpha_{1,s},\alpha_{2,s},\alpha_{3,s}$ belong to $(0,1)$
and their sum equals 1 to make all the auxiliary points to be a convex combination of three previous point.
If the wights $\alpha_{2,s} = 1$ for all $s$,
then $x_{t,s}^{md} = x_{t-1,s}$,
and the aggregate points $x_{t,s}^{ag} = x_{t,s}$ ,
$y_{t,s}^{ag} = y_{t,s}$ and $\lambda_{t,s}^{ag} = \lambda_{t,s}$.
In this situation,
if we set all $\theta_{s} = \rho_{s} = \beta$,
the ASVRG-ADMM becomes SVRG-ADMM.
However, with carefully chosen parameters
we can significantly improve the rate of convergence.

\subsection{Convergence Analysis}
In this subsection, we will give the convergence analysis of the proposed framework, and conclude our ASVRG-ADMM algorithm.

Before we address the main theorem on the convergence rate, we will firstly try to bound the variance of the stochastic gradient.
In order to control the variance of the stochastic gradient,
we first take a snapshot $\tilde{x}_{s-1}$ at the beginning of each outer iteration and calculate its full gradient $\tilde{v}_s$,
and then randomly sample one $f_{i_t}(x)$ and update $v_{t,s} = \nabla f_{i_t}(x_{t,s}^{md}) -\nabla f_{i_t}(\tilde{x}_{s-1}) + \tilde{v}_s$ in each inner loop.
Similar to \clr{\cite{allen2016katyusha}} and \clr{\cite{hien2016accelerated}},
we use a different upper bound from the analysis of SVRG-ADMM.
\begin{lemma}\label{lemma:variance}
In each of the inner iteration,the variance of $v_{t,s}$ is bounded by
\begin{eqnarray*}
\EBB \|v_{t,s} - \nabla f(x_{t,s}^{md})\|^2_2
&\le&
2L_Q(
f(\tilde{x}_{s-1}) - f(x_{t,s}^{md}) 
\\
&-&\langle \nabla f(x_{t-1,s}^{md}),\tilde{x}_{s-1}-x_{t,s}^{md}\rangle),
\end{eqnarray*}
where $L_Q = max_i\{L_i\}$.
\end{lemma}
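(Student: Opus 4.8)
The plan is to recognize this as the standard SVRG variance bound, adapted to the fact that $v_{t,s}$ estimates the gradient at the middle point $x_{t,s}^{md}$ rather than at $x_{t,s}$. First I would verify that $v_{t,s}$ is conditionally unbiased for $\nabla f(x_{t,s}^{md})$. Conditioning on the history before $i_t$ is drawn, the points $x_{t,s}^{md}$, $\tilde{x}_{s-1}$ and the snapshot gradient $\tilde{v}_s = \frac{1}{n}\sum_{i=1}^n \nabla f_i(\tilde{x}_{s-1})$ are all deterministic, and since $i_t$ is uniform we get $\EBB[\nabla f_{i_t}(x_{t,s}^{md})] = \nabla f(x_{t,s}^{md})$ and $\EBB[\nabla f_{i_t}(\tilde{x}_{s-1})] = \tilde{v}_s$, whence $\EBB[v_{t,s}] = \nabla f(x_{t,s}^{md})$.

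Next, writing $\xi_{i_t} = \nabla f_{i_t}(x_{t,s}^{md}) - \nabla f_{i_t}(\tilde{x}_{s-1})$, we have $v_{t,s} - \nabla f(x_{t,s}^{md}) = \xi_{i_t} - \EBB[\xi_{i_t}]$, so the quantity to bound is exactly the variance of $\xi_{i_t}$. I would invoke the elementary fact $\EBB\|\xi_{i_t} - \EBB[\xi_{i_t}]\|^2 \le \EBB\|\xi_{i_t}\|^2$ to discard the centering term, reducing the target to
\[
\EBB\|v_{t,s} - \nabla f(x_{t,s}^{md})\|^2 \le \frac{1}{n}\sum_{i=1}^n \|\nabla f_i(x_{t,s}^{md}) - \nabla f_i(\tilde{x}_{s-1})\|^2 .
\]

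The heart of the argument is the per-component smooth-convex gradient inequality. Since each $f_i$ is convex and $L_i$-smooth, one has (with base point $x_{t,s}^{md}$)
\[
\|\nabla f_i(\tilde{x}_{s-1}) - \nabla f_i(x_{t,s}^{md})\|^2 \le 2L_i\big(f_i(\tilde{x}_{s-1}) - f_i(x_{t,s}^{md}) - \langle \nabla f_i(x_{t,s}^{md}),\, \tilde{x}_{s-1} - x_{t,s}^{md}\rangle\big).
\]
I would establish this by the usual device: fix $x_{t,s}^{md}$, consider $\phi_i(z) = f_i(z) - \langle \nabla f_i(x_{t,s}^{md}), z\rangle$, observe that $\phi_i$ is convex, $L_i$-smooth and minimized at $x_{t,s}^{md}$, and then apply a single gradient-step descent bound to $\phi_i$ evaluated at $\tilde{x}_{s-1}$. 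Averaging over $i$, each Bregman-type summand on the right is nonnegative by convexity, so replacing every $L_i$ by $L_Q = \max_i L_i$ only enlarges the right-hand side; the averaged loss and gradient terms then collapse into $f$ and $\nabla f$, yielding the claimed bound.

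The main obstacle, and essentially the only delicate point, is the base-point bookkeeping in the smooth-convex inequality together with the justification that the uniform replacement $L_i \le L_Q$ is legitimate; this hinges on each summand $f_i(\tilde{x}_{s-1}) - f_i(x_{t,s}^{md}) - \langle \nabla f_i(x_{t,s}^{md}), \tilde{x}_{s-1} - x_{t,s}^{md}\rangle$ being nonnegative, which is precisely where convexity of the \emph{individual} $f_i$ (not merely of their average $f$) enters. I also note that the gradient appearing in the statement reads $\nabla f(x_{t-1,s}^{md})$, whereas the derivation above naturally produces $\nabla f(x_{t,s}^{md})$; I believe the $t-1$ subscript is a typographical slip.
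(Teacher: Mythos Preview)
Your proposal is correct and follows essentially the same route as the paper's proof: unbiasedness, the variance drop $\EBB\|\xi-\EBB\xi\|^2\le\EBB\|\xi\|^2$, the per-component smooth-convex inequality $\|\nabla f_i(\tilde x_{s-1})-\nabla f_i(x_{t,s}^{md})\|^2\le 2L_i(\cdot)$, and then the replacement $L_i\le L_Q$. You are in fact more careful than the paper in justifying that last step via nonnegativity of the individual Bregman residuals, and your observation that $\nabla f(x_{t-1,s}^{md})$ in the statement should read $\nabla f(x_{t,s}^{md})$ is correct.
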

\begin{proof}
\begin{align*}
&\EBB \|v_{t,s} - \nabla f(x_{t,s}^{md})\|^2_2
\\&=
\EBB \|\nabla f_{i_t}(x_{t,s}^{md}) -\nabla f_{i_t}(\tilde{x}_{s-1}) + \nabla f(\tilde{x}_{s-1}) - \nabla f(x_{t,s}^{md})\|^2_2
\\
&\le\EBB \|\nabla f_{i_t}(x_{t,s}^{md}) -\nabla f_{i_t}(\tilde{x}_{s-1})\|^2_2
\\
&\le\frac{1}{n}\sum_{i=1}^n 2L_i(f_{i}(\tilde{x}_{s-1})-f_i(x_{t,s}^{md})-\langle \nabla f_i(x_{t,s}^{md}),\tilde{x}_{s-1}-x_{t,s}^{md}\rangle)
\\
&\le2L_Q(f(\tilde{x}_{s-1}) - f(x_{t,s}^{md}) - \langle \nabla f(x_{t-1,s}^{md}),\tilde{x}_{s-1}-x_{t,s}^{md}\rangle).
\end{align*}
The first equality is according to the definition of $v_{t,s}$.
In the first inequality,we use $\EBB \|a - \EBB (a)\|^2_2 \le \EBB\|a\|^2_2$.
The second inequality is because of the smoothness of each $f_i(x)$,
and the last one is just due to the definition of $L_Q$.
\end{proof}

Utilizing the above lemma, we are able to obtain an upper bound of the progress of the gap funciton $Q$ in each inner loop.
\begin{lemma}\label{lemma:inneriteration}
 In Algorithm \ref{alg:asvrgadmm}, if we choose $\theta_{s}\ge\rho_{s}$ and $\eta_{s}\ge\bar{L}_s\alpha_{2,s}+\chi\theta_{s}\|A\|^2_2$,
we have

\begin{align*}
\EBB Q(x^*,y^*,\lambda;w_{t,s}^{ag}) &-
\alpha_{1,s}Q(x^*,y^*,\lambda;w_{t-1,s}^{ag}) \\
&-\alpha_{3,s}Q(x^*,y^*,\lambda;\tilde{w}_{s-1})\\
&\le \alpha_{2,s,t}[\frac{\eta_{s}}{2}(\|x_{t,s}-x^*\|^2-\|x_{t,s}-x^*\|^2)
\\
&+\frac{1}{2\rho_{s}}(\|\lambda_{t-1,s}-\lambda\|^2-\|\lambda_{t,s}-\lambda\|^2)
\nonumber\\
&+ \frac{\chi\theta_{s}}{2} \|A(x_{t,s}-x^*)\|^2 \\
&-\frac{\chi\theta_{s}}{2}\|A(x_{t-1,s}-x^*)\|^2
\nonumber\\
&+\frac{\theta_{s}}{2}\|Ax^*+By_{t-1,s}-c\|^2\\
&-\frac{\theta_{s}}{2}\|Ax^*+By_{t,s}-c\|^2],
\nonumber
\end{align*}
where $\bar{L}_{s}\ge L_Q/\alpha_{3,s}+L_f$ and the expectation is w.r.t $v_{t,s}$.
\end{lemma}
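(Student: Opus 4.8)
The plan is to bound the one-step progress by first passing from the aggregate iterate $w_{t,s}^{ag}$ to the single new iterate $w_{t,s}$, and then estimating $Q(x^*,y^*,\lambda;w_{t,s})$ through the optimality conditions of the $x$- and $y$-subproblems (using $Ax^*+By^*=c$ to drop the dual part of $Q$). The observation enabling the improved rate is that $x_{t,s}^{ag}-x_{t,s}^{md}=\alpha_{2,s}(x_{t,s}-x_{t-1,s})$, so $L_f$-smoothness of $f$ at the middle point gives $f(x_{t,s}^{ag})\le f(x_{t,s}^{md})+\alpha_{2,s}\langle\nabla f(x_{t,s}^{md}),x_{t,s}-x_{t-1,s}\rangle+\frac{L_f\alpha_{2,s}^2}{2}\|x_{t,s}-x_{t-1,s}\|^2$, making the smoothness error quadratically small in $\alpha_{2,s}$. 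I would then bound $f(x_{t,s}^{md})$ by the gradient inequality of $f$ against the three points $x_{t-1,s}^{ag},x^*,\tilde x_{s-1}$ with weights $\alpha_{1,s},\alpha_{2,s},\alpha_{3,s}$, but \emph{retain} the negative Bregman term $-\alpha_{3,s}\big(f(\tilde x_{s-1})-f(x_{t,s}^{md})-\langle\nabla f(x_{t,s}^{md}),\tilde x_{s-1}-x_{t,s}^{md}\rangle\big)$ coming from the snapshot direction rather than discarding it, while treating $g$ and the bilinear term $\langle\lambda,Ax+By-c\rangle$ by plain convexity and linearity. Since $\alpha_{1,s}+\alpha_{2,s}+\alpha_{3,s}=1$, this reconstructs the $\alpha_{1,s}Q(\cdots;w_{t-1,s}^{ag})$ and $\alpha_{3,s}Q(\cdots;\tilde w_{s-1})$ terms and leaves the residual $\alpha_{2,s}\big[\langle\nabla f(x_{t,s}^{md}),x_{t,s}-x^*\rangle+g(y_{t,s})-g(y^*)+\langle\lambda,Ax_{t,s}+By_{t,s}-c\rangle\big]$ plus quadratics.

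Next I would replace the exact gradient by $v_{t,s}$, writing $\langle\nabla f(x_{t,s}^{md}),x_{t,s}-x^*\rangle=\langle v_{t,s},x_{t,s}-x^*\rangle-\langle v_{t,s}-\nabla f(x_{t,s}^{md}),x_{t,s}-x^*\rangle$ and splitting the error through $x_{t-1,s}$. The piece $\langle v_{t,s}-\nabla f(x_{t,s}^{md}),x_{t-1,s}-x^*\rangle$ has zero conditional expectation, since $\EBB v_{t,s}=\nabla f(x_{t,s}^{md})$ while $x_{t-1,s}$ and $x^*$ are fixed given the past, and $\langle v_{t,s}-\nabla f(x_{t,s}^{md}),x_{t,s}-x_{t-1,s}\rangle$ is controlled by Young's inequality. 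Choosing the Young constant equal to $\alpha_{2,s}L_Q/\alpha_{3,s}$ turns the resulting $\|v_{t,s}-\nabla f(x_{t,s}^{md})\|^2$ term, after Lemma~\ref{lemma:variance} and taking expectation, into exactly $\alpha_{3,s}$ times the Bregman expression above, which cancels the negative term retained in the first step; the companion term is $\frac{\alpha_{2,s}^2 L_Q}{2\alpha_{3,s}}\|x_{t,s}-x_{t-1,s}\|^2$.

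I would then invoke first-order optimality of the $x$-update (line 10), which gives $v_{t,s}=-A^T\lambda_{t-1,s}-\eta_s(x_{t,s}-x_{t-1,s})$ minus a $\theta_s$ augmented term whose form depends on $\chi$, and optimality of the $y$-update together with $g(y_{t,s})-g(y^*)\le\langle g'(y_{t,s}),y_{t,s}-y^*\rangle$ for a subgradient $g'(y_{t,s})\in\partial g(y_{t,s})$. Applying the three-term identity $\langle u-v,u-w\rangle=\tfrac12(\|u-v\|^2+\|u-w\|^2-\|v-w\|^2)$ to the proximal inner product produces the $\frac{\eta_s}{2}(\|x_{t-1,s}-x^*\|^2-\|x_{t,s}-x^*\|^2)$ telescoping term, and the multiplier recursion $\lambda_{t,s}=\lambda_{t-1,s}+\rho_s(Ax_{t,s}+By_{t,s}-c)$ converts the coupling terms into the dual telescoping $\frac{1}{2\rho_s}(\|\lambda_{t-1,s}-\lambda\|^2-\|\lambda_{t,s}-\lambda\|^2)$; rewriting $B(y_{t,s}-y^*)=(Ax_{t,s}+By_{t,s}-c)-A(x_{t,s}-x^*)$ splits the $\theta_s$ cross-term into the constraint-residual differences $\frac{\theta_s}{2}\|Ax^*+By_{\cdot,s}-c\|^2$, the $\frac{\chi\theta_s}{2}\|A(x_{\cdot,s}-x^*)\|^2$ differences, and a residual multiple of $\|Ax_{t,s}+By_{t,s}-c\|^2=\rho_s^{-2}\|\lambda_{t,s}-\lambda_{t-1,s}\|^2$.

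The step I expect to be the main obstacle is verifying that every leftover non-telescoping quadratic is nonpositive under the stated hypotheses. The smoothness error and the Young companion term combine to $\frac{\alpha_{2,s}^2}{2}(L_Q/\alpha_{3,s}+L_f)\|x_{t,s}-x_{t-1,s}\|^2=\frac{\alpha_{2,s}^2\bar L_s}{2}\|x_{t,s}-x_{t-1,s}\|^2$, and for $\chi=1$ the linearized augmented term adds a further $\frac{\alpha_{2,s}\chi\theta_s\|A\|_2^2}{2}\|x_{t,s}-x_{t-1,s}\|^2$; both are absorbed by the proximal contribution $\frac{\alpha_{2,s}\eta_s}{2}\|x_{t,s}-x_{t-1,s}\|^2$ precisely when $\eta_s\ge\bar L_s\alpha_{2,s}+\chi\theta_s\|A\|_2^2$, the second hypothesis. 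The residual $\|\lambda_{t,s}-\lambda_{t-1,s}\|^2$ term, whose coefficient mixes $\rho_s^{-1}$ from the dual telescoping against $\theta_s\rho_s^{-2}$ from the constraint residual, is made nonpositive by $\theta_s\ge\rho_s$, the first hypothesis. Carefully tracking signs and the $\alpha_{2,s}$ factors through all the bilinear $A$- and $B$-cross-terms is the delicate part; taking the conditional expectation over $i_t$ at the end and invoking Lemma~\ref{lemma:variance} for the variance then yields the claimed bound.
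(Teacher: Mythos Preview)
Your proposal is correct and follows essentially the same route as the paper's proof: smoothness of $f$ at $x_{t,s}^{md}$ combined with $x_{t,s}^{ag}-x_{t,s}^{md}=\alpha_{2,s}(x_{t,s}-x_{t-1,s})$, the convex-combination decomposition toward $x_{t-1,s}^{ag},x^*,\tilde x_{s-1}$, cancellation of the $\alpha_{3,s}$ Bregman term against the variance via Lemma~\ref{lemma:variance}, and then the optimality conditions plus three-point identities to produce the four telescoping pairs, with the leftover quadratics killed by $\theta_s\ge\rho_s$ and $\eta_s\ge\bar L_s\alpha_{2,s}+\chi\theta_s\|A\|_2^2$. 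The only cosmetic difference is that the paper applies Cauchy--Schwarz directly to $\langle\nabla f(x_{t,s}^{md})-v_{t,s},\,x_{t,s}^{ag}-x_{t,s}^{md}\rangle$ rather than first passing to $\langle\nabla f(x_{t,s}^{md}),x_{t,s}-x^*\rangle$ and splitting through $x_{t-1,s}$, but since $x_{t,s}^{ag}-x_{t,s}^{md}=\alpha_{2,s}(x_{t,s}-x_{t-1,s})$ the two computations coincide.
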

The proof of this inequality is lengthy and tedious,
so we leave it in the supplement.
We can see that this bound is not only related to $x_{t,s}$ and $x_{t-1,s}$, but also related to $\tilde{x}_{s-1}$.
It is very common in snapshot based algorithms.
Lemma \ref{lemma:inneriteration} proves crucial for doing the induction to obtain the next step towards our final conclusion.

\begin{lemma}\label{lemma:final}
If in the $(s+1)$-th outer iteration, we choose
$\frac{1-\alpha_{2,s+1}}{\alpha_{2,s+1}^2} =\frac{1}{\alpha_{2,s}^2}$ and $\frac{\alpha_{3,s+1}}{\alpha_{2,s+1}^2}
=\frac{1-\alpha_{1,s}}{\alpha^2_{2,s}}$
,and make $\theta_{s}=\beta_1\alpha_{2,s}$, $\rho_{s}=\frac{\beta_2}{\alpha_{2,s}}$ ,$\eta_{s}=(\bar{L}+\chi\beta_1\|A\|_2^2)\alpha_{2,s}$.
Then in the $N$-th iteration, we have
\begin{align}
&\EBB [f(\hat{x}_{N})+g(\hat{y}_{N}) - f(x^*) - g(y^*)+\gamma\|A\hat{x}_N+B\hat{y}_N-c\|^2]
\nonumber\\
&\le\frac{\alpha_{2,N+1}^2}{1+m\alpha_{3,N+1}}[\frac{1+m\alpha_{3,1}}{\alpha_{2,1}^2}(f(\tilde{x}_0)+g(\tilde{y}_0) - f(x^*) - g(y^*))
\nonumber\\
&+\frac{\bar{L}+\chi\beta_1\|A\|_2^2}{2}D^2_{x^*}+\frac{1}{2\beta_2}\gamma^2
+\frac{\chi\beta_1}{2}D^2_{A,X}+\frac{\beta_1}{2}D^2_{y^*,B}],
\nonumber
\end{align}
here we need choose $\beta_1$ and $\beta_2$ to satisfy $\theta_{s} \ge \rho_{s}$, and $\bar{L} \ge \max_s\{L_Q/\alpha_{3,s}  + L_f\}$.
\end{lemma}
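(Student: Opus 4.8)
The plan is to turn the one-step estimate of Lemma~\ref{lemma:inneriteration} into a \emph{double telescoping} --- first over the inner counter $t=1,\dots,m$, then over the outer counter $s=1,\dots,N$ --- and then to convert the resulting bound on the gap function $Q(x^*,y^*,\lambda;\cdot)$ into the stated bound on the objective gap plus the weighted feasibility residual. The prescribed recurrences for $\alpha_{1,s},\alpha_{2,s},\alpha_{3,s}$ and the scalings $\theta_s=\beta_1\alpha_{2,s}$, $\rho_s=\beta_2/\alpha_{2,s}$, $\eta_s=(\bar L+\chi\beta_1\|A\|_2^2)\alpha_{2,s}$ are engineered so that this telescoping closes exactly; hence the real work is careful coefficient bookkeeping rather than any new inequality.

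First I would exploit that $Q(x^*,y^*,\lambda;\cdot)$ is convex in its second argument (convex in $(x,y)$, affine in $\lambda$). Since $w_{t,s}^{ag}=\alpha_{1,s}w_{t-1,s}^{ag}+\alpha_{2,s}w_{t,s}+\alpha_{3,s}\tilde w_{s-1}$ is a convex combination, the left-hand difference in Lemma~\ref{lemma:inneriteration} is exactly the per-step progress that convexity relates to $\alpha_{2,s}Q(\cdot;w_{t,s})$, and Lemma~\ref{lemma:inneriteration} supplies its upper bound by the four telescoping quadratic blocks. Summing over $t$, each block collapses to its $t=0$ and $t=m$ endpoints: the $\tfrac{\eta_s}{2}\|x_{t,s}-x^*\|^2$, $\tfrac{1}{2\rho_s}\|\lambda_{t,s}-\lambda\|^2$, $\tfrac{\chi\theta_s}{2}\|A(x_{t,s}-x^*)\|^2$, and $\tfrac{\theta_s}{2}\|Ax^*+By_{t,s}-c\|^2$ contributions all telescope. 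On the left I would invoke Jensen through $\tilde w_s=\tfrac{1}{m}\sum_t w_{t,s}^{ag}$ so that $\tfrac{1}{m}\sum_t Q(\cdot;w_{t,s}^{ag})\ge Q(\cdot;\tilde w_s)$, and reorganize the shifted sum using the carryover $w_{0,s}^{ag}=w_{m,s-1}^{ag}$, which links stage $s$ to stage $s-1$. The outcome is a recursion relating the combination $Q(\cdot;w_{m,s}^{ag})+m\alpha_{3,s}Q(\cdot;\tilde w_s)$ to the same object at stage $s-1$, plus the endpoint quadratics.

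Next I would normalise by $1/\alpha_{2,s}^2$. Substituting the parameter forms pulls a factor $\alpha_{2,s}$ out of every endpoint term, so after division each quadratic appears with a clean stage-independent coefficient ($\tfrac{\bar L+\chi\beta_1\|A\|_2^2}{2}$ on the $x$-distance, $\tfrac{1}{2\beta_2}$ on the $\lambda$-distance, $\tfrac{\chi\beta_1}{2}$ on the $A$-weighted term, $\tfrac{\beta_1}{2}$ on the $By$-term). The recurrences $\tfrac{1-\alpha_{2,s+1}}{\alpha_{2,s+1}^2}=\tfrac{1}{\alpha_{2,s}^2}$ and $\tfrac{\alpha_{3,s+1}}{\alpha_{2,s+1}^2}=\tfrac{1-\alpha_{1,s}}{\alpha_{2,s}^2}$ are precisely what make the scaled weight $\tfrac{1+m\alpha_{3,s+1}}{\alpha_{2,s+1}^2}$ on $Q(\cdot;w_{m,s}^{ag})+m\alpha_{3,s+1}Q(\cdot;\tilde w_s)$ reproduce the corresponding weight one stage earlier, so summing over $s=1,\dots,N$ telescopes the gap terms and leaves only the initial contribution $\tfrac{1+m\alpha_{3,1}}{\alpha_{2,1}^2}Q(\cdot;\tilde w_0)$. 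The endpoint quadratics also telescope across $s$, since $x_{0,s}=x_{m,s-1}$, $y_{0,s}=y_{m,s-1}$ and $\lambda_{0,s}=\lambda_{m,s-1}$ carry over between stages, leaving single boundary terms that I bound by the domain diameters $D_{x^*}^2$, $D_{A,X}^2$, $D_{y^*,B}^2$ and by $\gamma^2$ for the dual term. Using the feasibility of the initial point, $A\tilde x_0+B\tilde y_0-c=0$, kills the pairing in $Q(\cdot;\tilde w_0)$, reducing it to $f(\tilde x_0)+g(\tilde y_0)-f(x^*)-g(y^*)$, exactly as in the claim.

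Finally I would convert the telescoped bound into the stated inequality. The output $\hat w_N=\tfrac{1}{1+m\alpha_{3,N+1}}w_{m,N}^{ag}+\tfrac{m\alpha_{3,N+1}}{1+m\alpha_{3,N+1}}\tilde w_N$ is again a convex combination, so convexity of $Q$ gives $(1+m\alpha_{3,N+1})Q(x^*,y^*,\lambda;\hat w_N)\le Q(\cdot;w_{m,N}^{ag})+m\alpha_{3,N+1}Q(\cdot;\tilde w_N)$, that is, precisely the quantity the induction controls; this is why the output is averaged in this particular way. Expanding $Q(x^*,y^*,\lambda;\hat w_N)=f(\hat x_N)+g(\hat y_N)-f(x^*)-g(y^*)+\langle\lambda,A\hat x_N+B\hat y_N-c\rangle$ (the term $\langle\hat\lambda_N,Ax^*+By^*-c\rangle$ vanishing by feasibility of $(x^*,y^*)$), I would then choose the free dual variable $\lambda$ to turn the linear pairing, together with the $\tfrac{1}{2\beta_2}$ slack it sits against, into the $\gamma$-weighted feasibility term $\gamma\|A\hat x_N+B\hat y_N-c\|^2$ while leaving the residual $\tfrac{1}{2\beta_2}\gamma^2$ on the right. \emph{The main obstacle} is this coefficient bookkeeping across the two nested loops: verifying that the prescribed $\alpha$-recurrences make every cross-stage gap term cancel exactly, keeping the conditional expectation over the sampling aligned with the deterministic telescoping, and handling the dual-variable substitution so that it produces the squared feasibility term without re-introducing the constraint residual on the right; throughout one must check the hypotheses $\theta_s\ge\rho_s$ and $\bar L\ge\max_s\{L_Q/\alpha_{3,s}+L_f\}$ so that each call to Lemma~\ref{lemma:inneriteration} is legitimate.
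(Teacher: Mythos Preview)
Your plan is correct and essentially identical to the paper's own proof: divide the estimate of Lemma~\ref{lemma:inneriteration} by $\alpha_{2,s}^2$, telescope over $t=1,\dots,m$, apply Jensen through $\tilde w_s=\tfrac{1}{m}\sum_t w_{t,s}^{ag}$, use the two $\alpha$-recurrences to match the stage-$s$ weights to the stage-$(s{+}1)$ weights, telescope over $s=1,\dots,N$, combine $w_{m,N}^{ag}$ and $\tilde w_N$ into $\hat w_N$ by convexity, and finally take the supremum of the free multiplier over the ball $\{\|\lambda\|\le\gamma\}$ to convert the pairing into the feasibility term while bounding $\|\lambda_{m,0}-\lambda\|^2$ by $\gamma^2$. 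One small bookkeeping correction: in the outer recursion the surviving combination is $\tfrac{\alpha_{1,s+1}}{\alpha_{2,s+1}^2}Q(\cdot;w_{m,s}^{ag})+\tfrac{m\alpha_{3,s+1}}{\alpha_{2,s+1}^2}Q(\cdot;\tilde w_s)$ rather than a common factor $\tfrac{1+m\alpha_{3,s+1}}{\alpha_{2,s+1}^2}$ on a sum; the latter form in the lemma statement is simply a weakening using $\alpha_{1,s+1}\le 1$.
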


Before we conclude our main theorem of the convergence rate of ASVRG-ADMM,
we need to have a look at all the parameter constraints in Lemma ~\ref{lemma:final}.
\begin{displaymath}
\left\{
\begin{array}{l}
\frac{1}{\alpha_{2,s}^2} = \frac{1-\alpha_{2,s+1}}{\alpha_{2,s+1}^2},
\frac{1-\alpha_{1,s}}{\alpha_{2,s}} = \frac{\alpha_{3,s+1}}{\alpha_{2,s+1}^2};
\\
\alpha_{1,s}+\alpha_{2,s}+\alpha_{3,s} = 1
\\
\theta_{t,s} = \beta_{1} \alpha_{2,s},
\rho_{t,s} = \frac{\beta_{2}}{\alpha_2,s},
\theta_{s} \ge \rho_{s};
\\
\eta_{t,s} = \bar{L}\alpha_{2,s} + \chi \|A\|^2_2 \theta_{s}, \quad
\bar{L} \ge \max_s\{L_Q/\alpha_{3,s}  + L_f\}.
\end{array}
\right.
\end{displaymath}

 To satisfy the constraints on weight parameters $\alpha_{1,s},\alpha_{2,s},\alpha_{3,s}$,
 we can actually calculate the update rule of each
 \begin{align*}
 \alpha_{1,s+1}& = \alpha_{1,s}*(1-\alpha_{2,s+1}),
 \\
 \alpha_{2,s+1} &= \frac{\sqrt{\alpha_{2,s}^4+4\alpha_{2,s}^2}-\alpha_{2,s}^2}{2},
 \\
\alpha_{3,s+1}& = (1-\alpha_{1,s})*(1-\alpha_{2,s+1}).
\end{align*}
The following lemma depicts the property of this sequence.

\begin{lemma}\label{lemma:parameter}
If $\alpha_{1,s}\in (0,1),\alpha_{2,s} \in(0, \frac{2}{2+s}],\alpha_{3,s}\in(0,1),$ and their sum equals to 1,
 then  $\alpha_{1,s+1}\in(0,\alpha_{1,s})$, $\alpha_{2,s+1}\in
 (0,\min\{\alpha_{2,s},\frac{2}{s+3}\})$, $\alpha_{3,s+1}\in (\alpha_{3,s},1)$ and their sum equals to 1,too.
\end{lemma}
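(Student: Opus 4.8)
The plan is to verify the four assertions one at a time, treating the explicit recursions as given while leaning on the equivalent implicit relation $\frac{1-\alpha_{2,s+1}}{\alpha_{2,s+1}^2}=\frac{1}{\alpha_{2,s}^2}$ listed in the parameter constraints (the quadratic formula for $\alpha_{2,s+1}$ is exactly its positive root). First I would dispatch the normalization: substituting $\alpha_{1,s+1}=\alpha_{1,s}(1-\alpha_{2,s+1})$ and $\alpha_{3,s+1}=(1-\alpha_{1,s})(1-\alpha_{2,s+1})$ gives $\alpha_{1,s+1}+\alpha_{3,s+1}=1-\alpha_{2,s+1}$, so the three weights sum to $1$ with no further computation.

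Next I would analyze $\alpha_{2,s+1}$, which is the heart of the lemma. Positivity is immediate from the factored form $\alpha_{2,s+1}=\tfrac{1}{2}\alpha_{2,s}\big(\sqrt{\alpha_{2,s}^2+4}-\alpha_{2,s}\big)$, since $\sqrt{\alpha_{2,s}^2+4}>\alpha_{2,s}>0$. The bound $\alpha_{2,s+1}<\alpha_{2,s}$ reduces, after clearing the square root, to $0<4\alpha_{2,s}$, which holds. For the key bound $\alpha_{2,s+1}<\frac{2}{s+3}$ I would switch to the implicit relation and introduce $\phi(b)=\frac{1-b}{b^2}$, which is strictly decreasing on $(0,1)$ because $\phi'(b)=(b-2)/b^3<0$ there. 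Since $\phi(\alpha_{2,s+1})=1/\alpha_{2,s}^2\ge (s+2)^2/4$ by the hypothesis $\alpha_{2,s}\le\frac{2}{2+s}$, while $\phi\big(\tfrac{2}{s+3}\big)=\frac{(s+1)(s+3)}{4}$, the claim follows from the elementary comparison $(s+2)^2>(s+1)(s+3)$, i.e. $4>3$; strict monotonicity of $\phi$ then converts $\phi(\alpha_{2,s+1})>\phi(\tfrac{2}{s+3})$ into $\alpha_{2,s+1}<\frac{2}{s+3}$. Combined with $\alpha_{2,s+1}<\alpha_{2,s}$ this yields membership in $\big(0,\min\{\alpha_{2,s},\tfrac{2}{s+3}\}\big)$.

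Finally I would read off the statements on $\alpha_{1,s+1}$ and $\alpha_{3,s+1}$ from the two factored updates. Because $\alpha_{2,s+1}<\alpha_{2,s}\le\frac{2}{2+s}<1$ for $s\ge 1$, we have $1-\alpha_{2,s+1}\in(0,1)$; together with $\alpha_{1,s}\in(0,1)$ this gives $\alpha_{1,s+1}=\alpha_{1,s}(1-\alpha_{2,s+1})\in(0,\alpha_{1,s})$ and $\alpha_{3,s+1}=(1-\alpha_{1,s})(1-\alpha_{2,s+1})<1$ as a product of two numbers in $(0,1)$. For the lower bound $\alpha_{3,s+1}>\alpha_{3,s}$ I would use $\alpha_{3,s}=1-\alpha_{1,s}-\alpha_{2,s}$ and rewrite the target as $(1-\alpha_{1,s})\alpha_{2,s+1}<\alpha_{2,s}$, which holds since $1-\alpha_{1,s}<1$ and $\alpha_{2,s+1}<\alpha_{2,s}$.

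The main obstacle is the single inequality $\alpha_{2,s+1}<\frac{2}{s+3}$; everything else is bookkeeping on the factored updates and the sum-to-one constraint. The trick there is to avoid manipulating the awkward square-root expression directly and instead pass to the monotone surrogate $\phi$, so that the inductive step on the rate sequence collapses to the trivial comparison $(s+2)^2\ge(s+1)(s+3)$.
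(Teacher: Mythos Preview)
Your proposal is correct and runs parallel to the paper's argument: both verify the sum-to-one identity from the factored updates, establish $\alpha_{2,s+1}\in(0,\alpha_{2,s})$ directly, and then deduce the claims on $\alpha_{1,s+1}$ and $\alpha_{3,s+1}$ from $1-\alpha_{2,s+1}\in(0,1)$.

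The one place where the routes diverge is the key bound $\alpha_{2,s+1}<\frac{2}{s+3}$. The paper works with the \emph{forward} map $h(a)=\tfrac{1}{2}\big(\sqrt{a^4+4a^2}-a^2\big)$, observes $h'(a)>0$ on $(0,1)$, infers $\alpha_{2,s+1}=h(\alpha_{2,s})\le h\!\big(\tfrac{2}{s+2}\big)$, and then asserts $h\!\big(\tfrac{2}{s+2}\big)<\tfrac{2}{s+3}$ without further detail. You instead pass to the equivalent \emph{implicit} relation via $\phi(b)=\frac{1-b}{b^2}$, whose strict monotonicity turns the same comparison into the transparent arithmetic inequality $(s+2)^2>(s+1)(s+3)$. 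The two arguments are logically equivalent (each is the inverse-function rephrasing of the other), but your packaging has the advantage of exposing the underlying one-line inequality that the paper leaves implicit.
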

\begin{proof}
Denote $h(a) = \frac{\sqrt{a^4+4a^2}-a^2}{2}$ as a function of $a$,
we have $\nabla h(a) >0$ for $a\in (0,1)$.
Then $\alpha_{1,s+1} \le h(\frac{2}{s+2})$.
Since $h(\frac{2}{s+2}) < \frac{2}{s+3}$,
then $\alpha_{1,s+1} < \frac{2}{s+3}$.
Besides, it is easy to verify $\alpha_{2,s+1} - \alpha_{2,s} \le 0$ and
$0<\alpha_{2,s+1}<1$.
then we have $\alpha_{2,s+1} \in (0,\min\{\alpha_{2,s},\frac{2}{s+3}\})$.
Since $\alpha_{1,s+1} = \alpha_{1,s}*(1-\alpha_{2,s+1})$ ,
then $\alpha_{1,s+1} \in (0,\alpha_{1,s})$.
As $\alpha_{3,s+1} -\alpha_{3,s} = \alpha_{1,s} +\alpha_{2,s}-
\alpha_{1,s+1}-\alpha_{2,s+1}$, then $\alpha_{3,s+1} \in (\alpha_{3,s},1)$.
\end{proof}

This means $\{\alpha_{1,s}\}$ and $\{\alpha_{2,s}\}$ are decreasing and $\{\alpha_{3,s}\}$ is increasing.
Besides, we also have all $\alpha_{2,s} \le \frac{2}{s+2}$ if $\alpha_{2,1} \le \frac{2}{3}$.
Actually,
we can verify that if $\alpha_{2,1} = 2/3$,
then $\alpha_{2,s} \to \frac{2}{2+s}$ as $s \to \infty$ ,
and  $\beta_1 = N$, $\beta_2 = \frac{1}{N}$  will ensure $\theta_{s} \ge 1 \ge \rho_{s}$.
Since $\alpha_{3,s}$ is increasing,
$\bar{L} \ge L_Q/\alpha_{3,1}+L_f$ will satisfy the constraint.
Based on all these,
we are now ready to present our main theorem of convergence rate:

\begin{them}\label{them:main}
If we initialize $\alpha_{2,1} = \frac{2}{3}$,
$\alpha_{3,1} \in (0,\frac{1}{3})$,
$\alpha_{1,1} = 1-\alpha_{2,1}-\alpha_{3,1}$,
then with proper parameter setting the same as in lemma~\ref{lemma:final} and $\beta_1 = N$,
$\beta_2 = \frac{1}{N}$,
$\bar{L} = L_Q/\alpha_{3,1}+L_f$,
we have
\begin{align*}
&\EBB [f(\hat{x}_{N})+g(\hat{y}_{N}) - f(x^*) - g(y^*)+\gamma\|A\hat{x}_N+B\hat{y}_N-c\|^2]
\nonumber\\
&=\OO\big(\frac{1}{N^2}(f(\tilde{x}_0)+g(\tilde{y}_0) - f(x^*) - g(y^*))
+\frac{L_Q+L_f}{mN^2}D^2_{x^*}
\nonumber\\
&+\frac{1}{mN}
[\frac{\chi\|A\|_2^2}{2}D^2_{x^*}+\gamma^2
+\chi D^2_{A,X}+D^2_{y^*,B}]\big).
\end{align*}
\end{them}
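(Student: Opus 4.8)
The plan is to treat Theorem~\ref{them:main} as a direct specialization of Lemma~\ref{lemma:final}: once the concrete choices $\alpha_{2,1}=\tfrac{2}{3}$, $\beta_1=N$, $\beta_2=\tfrac{1}{N}$ and $\bar{L}=L_Q/\alpha_{3,1}+L_f$ are substituted into the master bound, everything reduces to (i) checking that these choices are admissible and (ii) reading off the decay rates of the $\alpha$-sequences from Lemma~\ref{lemma:parameter}. So the first thing I would do is verify the two standing hypotheses of Lemma~\ref{lemma:final}. For $\bar{L}\ge\max_s\{L_Q/\alpha_{3,s}+L_f\}$ I would invoke the monotonicity from Lemma~\ref{lemma:parameter}: since $\{\alpha_{3,s}\}$ is increasing we have $\alpha_{3,s}\ge\alpha_{3,1}$ for every $s$, hence $L_Q/\alpha_{3,s}+L_f\le L_Q/\alpha_{3,1}+L_f=\bar{L}$, so the maximum is attained at $s=1$ and the constraint holds with equality. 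For $\theta_s\ge\rho_s$ I would unfold $\theta_s=\beta_1\alpha_{2,s}=N\alpha_{2,s}$ and $\rho_s=\beta_2/\alpha_{2,s}=1/(N\alpha_{2,s})$; both $\theta_s\ge1$ and $\rho_s\le1$ then collapse to the single requirement $N\alpha_{2,s}\ge1$ for all $s$, and since $\{\alpha_{2,s}\}$ is decreasing the binding case is $s=N$, where $\alpha_{2,s}\ge 2/(s+2)$ (the consequence of $\alpha_{2,1}=\tfrac{2}{3}$ recorded before the theorem) yields $N\alpha_{2,N}\ge 2N/(N+2)\ge1$ for $N\ge2$.

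With admissibility settled, I would control the two global prefactors sitting in front of the bracket in Lemma~\ref{lemma:final}. From Lemma~\ref{lemma:parameter}, $\alpha_{2,1}=\tfrac{2}{3}$ forces $\alpha_{2,s}\le 2/(s+2)$, so $\alpha_{2,N+1}\le 2/(N+3)$ and therefore $\alpha_{2,N+1}^2=\OO(1/N^2)$; this is the source of the quadratic speedup. For the second prefactor, monotonicity of $\{\alpha_{3,s}\}$ gives $\alpha_{3,1}\le\alpha_{3,N+1}<1$, whence $1/(1+m\alpha_{3,N+1})=\OO(1/m)$ uniformly in $N$ (the lower bound $\alpha_{3,N+1}\ge\alpha_{3,1}>0$ is what makes this uniform) and also $(1+m\alpha_{3,1})/(1+m\alpha_{3,N+1})\le1$.

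Finally I would distribute $\alpha_{2,N+1}^2/(1+m\alpha_{3,N+1})$ across the bracketed terms and substitute $\beta_1=N$, $\beta_2=1/N$, $\bar{L}=\OO(L_Q+L_f)$. The key bookkeeping point is that the objective-gap term carries the extra factor $(1+m\alpha_{3,1})/\alpha_{2,1}^2$; combining it with the prefactor and using $(1+m\alpha_{3,1})/(1+m\alpha_{3,N+1})\le1$ and $\alpha_{2,1}^2=\tfrac{4}{9}$ cancels the $1/m$ and leaves an $\OO(1/N^2)$ coefficient on $f(\tilde{x}_0)+g(\tilde{y}_0)-f(x^*)-g(y^*)$. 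The $\bar{L}$-term has no further $N$-dependence, so it becomes $\OO(1/N^2)\cdot\OO(1/m)\cdot\OO(L_Q+L_f)\,D_{x^*}^2=\OO\big(\tfrac{L_Q+L_f}{mN^2}D_{x^*}^2\big)$. Each of the remaining terms picks up exactly one factor of $\beta_1=N$ (from $\chi\beta_1\|A\|_2^2$, $\chi\beta_1$, $\beta_1$) or of $1/\beta_2=N$ (from $\tfrac{1}{2\beta_2}\gamma^2$); multiplied by the $\OO(1/N^2)\cdot\OO(1/m)$ prefactor, each lands at order $\OO(1/(mN))$ and assembles into the $\tfrac{1}{mN}[\tfrac{\chi\|A\|_2^2}{2}D_{x^*}^2+\gamma^2+\chi D_{A,X}^2+D_{y^*,B}^2]$ bundle, with $\gamma$ carried through unchanged. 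Collecting the three groups yields the stated rate.

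The main obstacle I anticipate is not any single inequality but the uniform-in-$s$ control of the weight sequences: the constraint $\theta_s\ge\rho_s$ needs a genuinely two-sided envelope $1/N\le\alpha_{2,s}\le 2/(s+2)$ holding simultaneously over all outer indices, and the clean $1/N^2$-versus-$1/N$ split hinges on $\alpha_{2,N+1}$ decaying like $1/N$ rather than faster or slower. Both rest on Lemma~\ref{lemma:parameter} together with the recursion $(1-\alpha_{2,s+1})/\alpha_{2,s+1}^2=1/\alpha_{2,s}^2$; once the sharp envelope for $\alpha_{2,s}$ and the positive lower bound on $\alpha_{3,s}$ are secured, the remainder is routine substitution of $\beta_1,\beta_2$ and grouping of terms.
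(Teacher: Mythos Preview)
Your overall strategy---specialize Lemma~\ref{lemma:final} with the stated parameters, control the prefactor $\alpha_{2,N+1}^2/(1+m\alpha_{3,N+1})$ via Lemma~\ref{lemma:parameter}, and then distribute $\beta_1=N$, $\beta_2=1/N$, $\bar L=L_Q/\alpha_{3,1}+L_f$ across the bracket---is precisely what the paper does, and your term-by-term accounting of the $\OO(1/N^2)$ versus $\OO(1/(mN))$ split is correct.

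There is, however, one genuine slip in your admissibility check for $\theta_s\ge\rho_s$. You invoke ``$\alpha_{2,s}\ge 2/(s+2)$'' as though it were recorded before the theorem, but Lemma~\ref{lemma:parameter} supplies the \emph{upper} bound $\alpha_{2,s}\le 2/(s+2)$, and the remark preceding the theorem only says $\alpha_{2,s}\to 2/(s+2)$ asymptotically. The inequality you wrote is already false at $s=2$: from the recursion one gets $\alpha_{2,2}=2(\sqrt{10}-1)/9\approx 0.481<1/2=2/(2+2)$. What you actually need is the weaker statement $\alpha_{2,N}\ge 1/N$, i.e.\ $b_N:=1/\alpha_{2,N}\le N$. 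This can be extracted from the recursion $b_{s+1}^2-b_{s+1}=b_s^2$, which yields $b_{s+1}-b_s=b_{s+1}/(b_{s+1}+b_s)<1$; since a direct computation gives $b_3\approx 2.64<3$, it follows that $b_s<s$ for all $s\ge 3$, hence $N\alpha_{2,N}>1$ for $N\ge 3$. The paper itself simply asserts $\theta_s\ge 1\ge \rho_s$ without argument, so both treatments are informal at this point; but your stated inequality has the wrong direction and would not survive a check.
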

\begin{proof}
%
%
Since $\alpha_{2,N+1}\le\frac{2}{N+3}$ and $\frac{1}{1+m\alpha_{3,N+1}} \le \frac{1}{1+m\alpha_{3,1}}$,
then according to \ref{lemma:final},
we have
\begin{align*}
&f(\hat{x}_{N})+g(\hat{y}_{N}) - f(x^*) - g(y^*)+\gamma\|A\hat{x}_N+B\hat{y}_N-c\|^2
\nonumber\\
&\le\frac{4}{\alpha_{2,1}^2(N+3)^2}(f(\tilde{x}_0)+g(\tilde{y}_0) - f(x^*) - g(y^*))
\\
&+\frac{N}{2m(N+3)^2}
[{\chi\|A\|_2^2}D^2_{x^*}+\gamma^2
+\chi D^2_{A,X}+D^2_{y^*,B}]
\\
&+\frac{2(L_Q/\alpha_{3,1}+L_f)}{(N+3)^2(1+m\alpha_{3,1)}}D^2_{x^*}
\nonumber\\
&=\OO\big(\frac{1}{N^2}(f(\tilde{x}_0)+g(\tilde{y}_0) - f(x^*) - g(y^*))
+\frac{L_Q+L_f}{mN^2}D^2_{x^*}
\nonumber\\
&+\frac{1}{mN}
[\chi\|A\|_2^2D^2_{x^*}+\gamma^2
+\chi D^2_{A,X}+D^2_{y^*,B}]\big).
\end{align*}
\end{proof}

The convergence result in {\bf Theorem \ref{them:main}} mainly consists of the convergence of three parts:
\begin{itemize}
\item $f(\tilde{x}_0)+g(\tilde{y}_0) - f(x^*) - g(y^*)$:
    This part measures the influence of  the initial objective value on the convergence rate.
    Actually in most ERM problems such as (group/fused) lasso,
    $l_1/l_2$ penalized regresion/logistic regression,
    $f(\tilde{x}_0)+g(\tilde{y}_0) - f(x^*) - g(y^*)$ can be quite small if we simply initialized $\tilde{x}_0 = 0$ and $\tilde{y}_0 =0$.
    For example,
    in two label classification,
    for any commonly used regularizer $g(y)$($l_1/l_2$ norm;group/fussed lasso penalty),
    if $f(x)$ is the logistic loss then
    $f(0)+g(0) = In(2) $;
    if $f(x)$ is the square loss then $f(0) + g(0) = \frac{1}{2}$.
    Since $f(x_*)+g(y_*)$ here must be great than 0,
    then this part can be really small compared to the other two parts.
\item
    $(L_Q+L_f)D_{x^*}^2$:
    This is the dominant part of the convergence since $L_Q$ and $L_f$ can be rather large for ERM problems with a big dataset,
    and we assume $L_Q = O(L_f)$ without lost of generality.
\item
    $\chi\|A\|_2^2D^2_{x^*}+\gamma^2+\chi D^2_{A,X}+D^2_{y^*,B}$: Here $D_{x^*}$, $D_{A,X}$, $D_{y^*,B}$ are constant denoting the boundary of the effective domain of $x$ and $y$.They are always assumed to be finite in the analysis of stochastic ADMM algorithms and are typically small compared to $(L_Q+L_f)D_{x^*}^2$.
    It also shows the linearization of $\|Ax+By-c\|^2$  ($\chi = 1$) will not effect the convergence significantly.


\end{itemize}

Thus if we make the count of iter loop as $m = n$, then the convergence rate of ASVRG-ADMM
can be written as $O(\frac{L_fD_X^2}{nN^2}+ \frac{c_3}{N^2} +\frac{c_2}{nN})$,
where $c_2$ and $c_3$ are rather small compared to $L_fD_X^2$.
If we use similar notation,the convergence rate of SAG-ADMM is $O(\frac{L_fD_X^2}{N}+\frac{c_4}{nN})$ and the convergence rate of SVRG-ADMM is $O(\frac{L_fD_X^2}{nN}+ \frac{c_5}{N} +\frac{c_6}{nN})$,
where $c_4,c_5,c_6$ are all small compared to $L_fD_X^2$ in most of the ERM problems.
Typically, $c_3,c_5$ are of the same order, $c_2,c_4,c_6$ are of the same order and $c_3,c_5$ are  much smaller than $c_2,c_4,c_6$.
It is clear from the above analysis that our proposed accelerated stochastic ADMM algorithm(ASVRG-ADMM) are much better than other
stochastic variance reduction ADMM without acceleration in dealing with ERM problem.
Moreover, when the dataset is large and $f(x)$ has a large smoothness constant, the second part will dominant the convergence
and the convergence rate will be optimal $O(1/N^2)$ w.r.t. the smoothness constant,
while other methods only achieve $O(1/N)$ convergence.
As for the vanilla stochastic ADMM algorithms without variance reduction SADMM \clr{\cite{ouyang2013stochastic}} and ASADMM \clr{ \cite{azadi2014towards}}, both of their analyses include the variance of the stochastic gradient $\sigma$ explicitly, and the convergence rate is $O(\frac{\sigma D_X}{\sqrt{nN}})$  w.r.t. $\sigma$,
which is uncontrollable and can be extremely bad in practice.

\subsection{Discussion on the parameters}
Let's have a deeper insight into the parameters in the ASVRG-ADMM algorithm.
The weight parameters $\alpha_{1,s},\alpha_{2,s} \alpha_{3,s}$ are updated once in each out iteration,
and stay the same in the inner loop.
In each inner loop,
the update of  $x_{t,s}^{md}$ and $x_{t,s}^{ag}$ are:
\[
x_{t,s}^{md} = \alpha_{1,s} x_{t-1,s}^{ag} + \alpha_{2,s}x_{t-1,s}+\alpha_{3,s}\tilde{x}_{s-1};
\]
\[
x_{t,s}^{ag} = \alpha_{1,s} x_{t-1,s}^{ag} + \alpha_{2,s}x_{t,s}+\alpha_{3,s}\tilde{x}_{s-1}.
\]
It seems that both $x_{t,s}^{md}$ and $x_{t,s}^{ag}$ are somehow trapped at $\tilde{x}^{s-1}$,
and we even gradually increase the weight on $\tilde{x}_{s-1}$.
However,
it is not incomprehensible since we are actually analyze the convergence of sequences $\{\tilde{x}_{s}\}$
(since $\alpha_{3,N+1}m>> 1$, $\bar{w}_s \approx \tilde{w}_s$).
According to line 18 in Algorithm ~\ref{alg:asvrgadmm}
\[
\tilde{x}_s = \frac{\alpha_{1,s}}{m}\sum_{i=0}^{m-1}x_{i,s}^{ag} +\frac{\alpha_{2,s}}{m}\sum_{i=1}^m x_{i,s} + \alpha_{3,s}\tilde{x}_{s-1},
\]
and it is updated after we use one full gradient and n stochastic gradient.
While in the deterministic accelerated ADMM algorithm proposed by \clr{\cite{ouyang2015accelerated}},
the update of the auxiliary points are \[
x_{s}^{md} = \alpha_{1,s} x_{s-1}^{ag} + \alpha_{2,s}x_{s-1};\quad
x_{s}^{ag} = \alpha_{1,s} x_{s-1}^{ag} + \alpha_{2,s}x_{s}.
\]
Here $x_s$ is updated based on the full gradient,
and $\alpha_{2,s}$ is decreased similar to $\alpha_{2,s}$ in our algorithm.
Since $x_{s}^{ag}$ is updated based an one full gradient
and the convergence of their algorithm is w.r.t. $x_{s}^{ag}$,
$x_{s}^{ag}$ here is similar to $\tilde{x}_{s}$   instead of $x_{t,s}^{ag}$ in our ASVRG-ADMM algorithm.
As the weight of previous $x_{s-1}^{ag}$  is also gradually increased,
it is not counterintuitive for us to gradually increase the weight of $\tilde{x}_{s-1}$.
Actually, when $m = 1$, ASVRG-ADMM is actually the same as
the deterministic accelerated ADMM method proposed by \clr{\cite{ouyang2015accelerated}}.

\begin{table*}[t]{\label{table:data}}
\caption{A summary of the datasets.}
\label{tab:datasets}
\begin{center}
\begin{footnotesize}
\begin{tabular}{c c c c c c}
\hline
	{\bf Dataset}      & ~~w8a~~~&~~~ijcnn1~~&~~a9a~~&~~covtype~~\\
\hline
    {\bf \#Instance}   &  $49,749$ &$49,990$ &   $32,561$  &$581012$ \\
    {\bf \#Attribute~}&   $300$   & $22$    &   $123$    &$54$      \\
    {\bf \#$L_f$} &$2.6448$&$0.2305$&$6.2877$&$2.0164*10^7$\\
\hline
\end{tabular}
\end{footnotesize}
\end{center}
\end{table*}

\begin{figure*}[!ht]\label{fig:general}
\begin{center}
\centering
\subfigure[a9a]{\includegraphics[width = 40mm]{./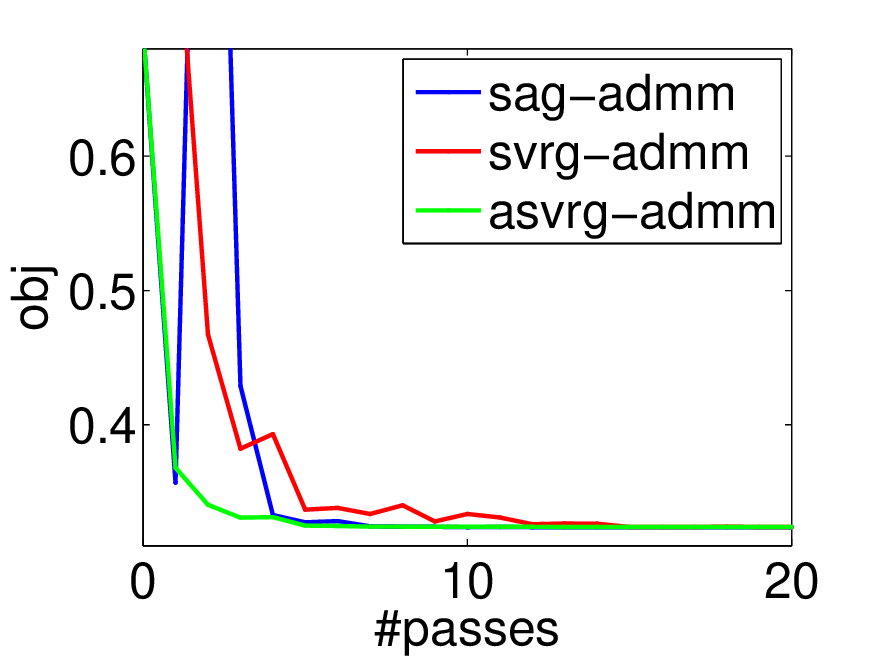}}
\subfigure[w8a]{\includegraphics[width = 40mm]{./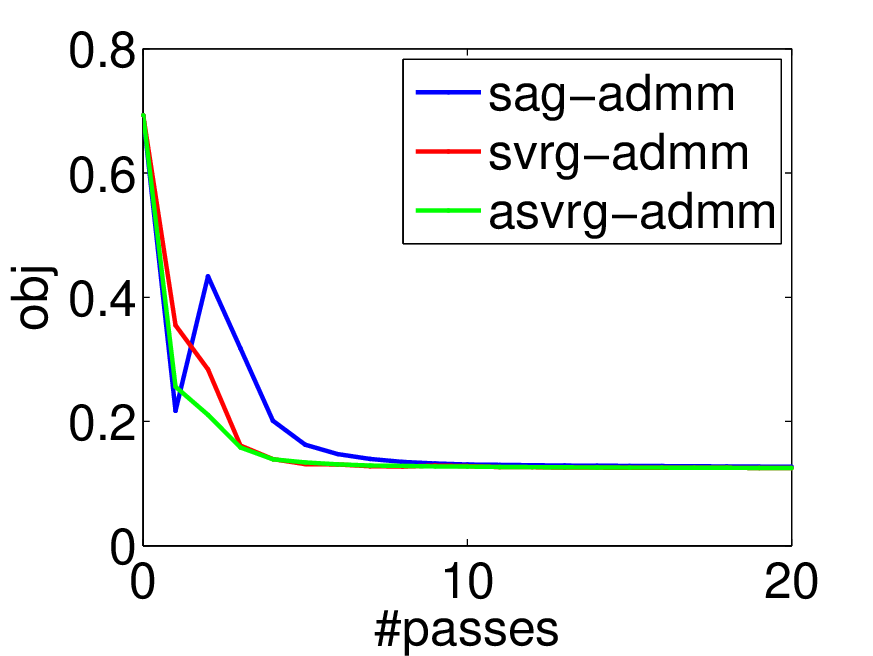}}
\subfigure[ijcnn1]{\includegraphics[width = 40mm]{./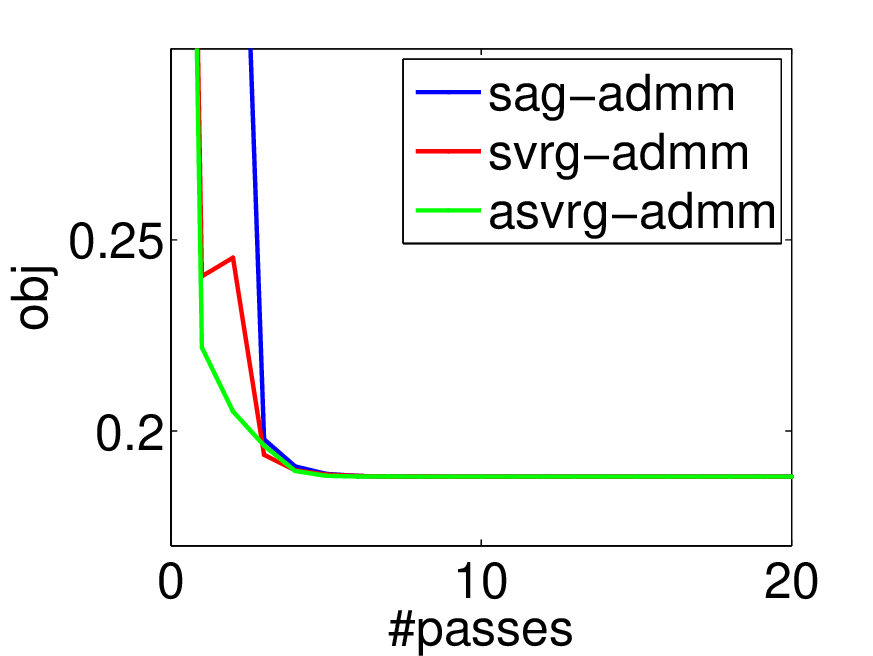}}
\subfigure[covertype]{\includegraphics[width = 40mm]{./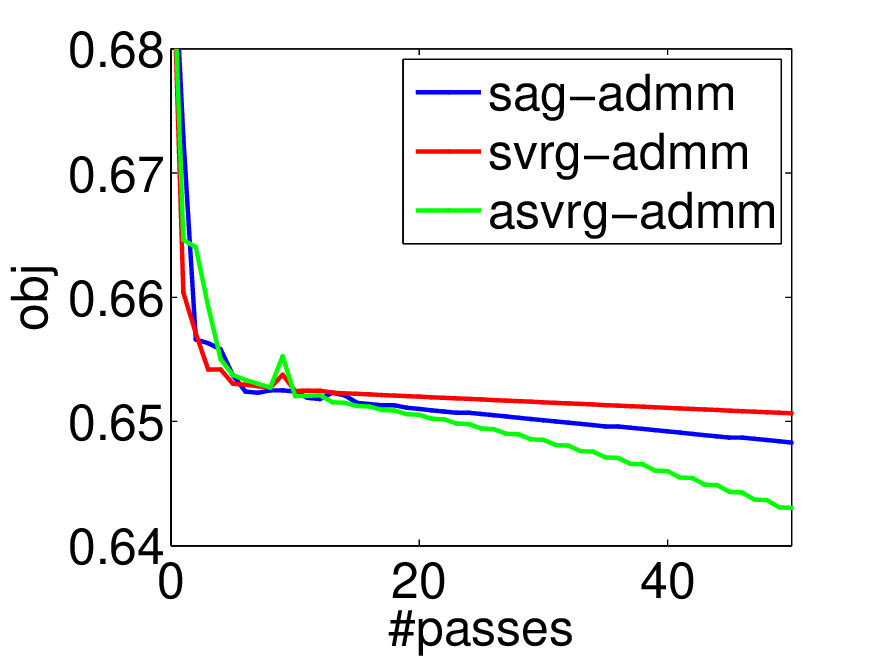}}
\end{center}
   \caption{Performance over effective passes of data for convex $f$}
\label{fig:error}
\end{figure*}

Besides, we have to mention that the parameter update rules we use here are different from all other accelerated stochastic methods with similar procedure \clr{\cite{allen2016katyusha,hien2016accelerated}},
where they fix the weight parameter $\alpha_{3}$ of $\tilde{x}_{s-1}$ to be a constant, and only change
$\alpha_{1,s}$ and $\alpha_{2,s}$.

Since $\eta_{s} = \bar{L}\alpha_{2,s} + \chi \|A\|_2^2\theta_{s}$ and $\theta_s = \beta_1 \alpha_{2,s}$,
the matrix inverse in x update rule (\ref{xupdate:non}) is
\[
(\eta_s I + \theta_s A^T A)^{-1} = \alpha_{2,s}^{-1}(\bar{L} I + \beta_1 A^TA)^{-1}.
\]
Then we can pre-compute $(\bar{L} I + \beta_1 A^TA)^{-1}$ and store it in order the decrease the computation.
According to the update rules of $\theta_s,\eta_s,\rho_s$,
we gradually decrease $\theta_s$ and $\eta_s$,
and increase $\rho_s$.
This means we increases the step size of the x-update and
the $\lambda$-update.

\section{Experimental Results}
In this section, we give the experimental results of the proposed algorithm.
We consider the generalized lasso \clr{\cite{tibshirani2011solution}}.
More specifically, we solve the following optimization problem:
\[
\min_{x,y} \frac{1}{n}\sum_{i=1}^n f_i(x) + \nu \|Fx\|_1,
\]
where the  penalty matrix  $F$  gives information about the underlying sparsity pattern of $x$.
While proximal methods have been used to solve this problem ( \clr{\cite{liu2010efficient}}, \clr{\cite{barbero2011fast}}),
the existence of $F$ makes the underlying proximal step difficult to solve.
This problem can be rewritten as
\[
\min_{x,y} \frac{1}{n}\sum_{i=1}^n f_i(x) + \nu \|y\|_1,\quad  s.t. \quad Fx -y =0.
\]
It can be solved efficiently by ADMM methods by dealing with $x$ and $y$ separately.
In our experiments, we focus on the graph guided fused lasso \clr{\cite{kim2009multivariate}},
where $F =[G;I]$ is constructed based on  a graph and $G$ denotes the sparsity pattern of graph.
We use SVRG-ADMM\clr{\cite{zheng2016fast} }and SAG-ADMM \clr{\cite{zhong2014fast}} as our baseline.
Since we can linearize $\|Fx - y\|^2_2$ in the augmented lagrangian in all these three method ,
here in our implementation we use this technique to reduce the computation complexity.

We conduct experiments on four real-world datasets from web machine learning repositories.
All the datasets were downloaded from the LIBSVM website, and their basic information are listed in Table 1.
We repeat all the experiments for 10 times  and report the average results.
For the parameters in ASVRG-ADMM,
we initialize $\alpha_{2,1} = 2/3$ , $\alpha_{3,1} = 1/10$,
and all other parameters are set as in Theorem \ref{them:main}.
For parameters in SAG-ADMM and SVRG-ADMM,
we all choose them as indicated in their papers,
and SAG-ADMM is  initialized by running SADMM for n iteration.
All the methods are implemented in Matlab,
and all the experiments are performed on a Windows server with two Intel Xeon E5-2690 CPU and 128GB memory.

If the algorithm uses one full gradient or n stochastic gradients, we call it uses one effective pass of data.
We report the objective value over effective passes of data.
We can see that  in all the four datasets,
our ASVRG-ADMM algorithm outperforms the other two algorithms.
On the first three data set, the smoothness constant is small and all the  three method converges after 20 effective passes of data,
the ASVRG-ADMM method converges faster then the other two algorithms.
When $L_f$ is large(on the covtype dataset),
all the three algorithms do not converge in 50 passes of data,
but the objective of ASVRG-ADMM decreases much faster than the other two algorithms.

\section{Conclusion}
In this paper, we combine the variance reduction technique and the acceleration technique in stochastic ADMM together,
we devised a new stochastic method with faster convergence rate,
especially in term of the smoothness constant of $f(x)$.
It is of great significance since big dataset often means a large smoothness constant.
Our experimental results also validate the effectiveness of our algorithm.
\bibliographystyle{plainnat}
\bibliography{asadmm}

\section{Appendix}
\subsection{Proof of Lemma 2}

\begin{proof}
By the smoothness of $f(\cdot)$,
we have
\[
f(x_{t,s}^{ag}) \le f(x_{t,s}^{md}) + \langle \nabla f(x_{t,s}^{md}),x_{t,s}^{ag}-x_{t,s}^{md}\rangle + \frac{L_f}{2} \|x_{t,s}^{ag}-x_{t,s}^{md}\|^2_2.
\]
According to line 7 and line 11 in Algorithm~\ref{alg:asvrgadmm},
we have $x_{t,s}^{ag} - x_{t,s}^{md} = \alpha_{2,s}(x_{t,s}-x_{t-1,s})$.
Substituting this into the above inequality,
we have
\begin{eqnarray*}
f(x_{t,s}^{ag}) &\le& f(x_{t,s}^{md}) + \langle \nabla f(x_{t,s}^{md}),x_{t,s}^{ag}-x_{t,s}^{md}\rangle + \frac{L_f\alpha_{2,s}^2}{2} \|x_{t,s}-x_{t-1,s}\|^2_2.
\\
&\le&f(x_{t,s}^{md}) + \langle \nabla f(x_{t,s}^{md})-v_{t,s},x_{t,s}^{ag}-x_{t,s}^{md}\rangle
+ \frac{L_f\alpha_{2,s}^2}{2} \|x_{t,s}-x_{t-1,s}\|^2_2+
\langle v_{t,s}, x_{t,s}^{ag}-x_{t,s}^{md}\rangle
\\
&\le&f(x_{t,s}^{md}) + \frac{\alpha_{3,s}}{2L_Q}\|\nabla f(x_{t,s}^{md})-v_{t,s}\|^2_2+\frac{L_Q}{2\alpha_{3,s}}\|x_{t,s}^{ag}-x_{t,s}^{md}\|^2_2
+ \frac{L_f\alpha_{2,s}^2}{2} \|x_{t,s}-x_{t-1,s}\|^2_2
\\
&&+\langle v_{t,s}, \alpha_{1,s}x_{t-1,s}^{ag}+\alpha_{2,s}x_{t,s}+\alpha_{3,s} \tilde{x}_{s-1}-x_{t,s}^{md}\rangle
\\
&\le&
\alpha_{1,s}(f(x_{t,s}^{md})+\langle v_{t,s},x_{t-1,s}^{ag}- x_{t,s}^{md}\rangle)
+\alpha_{2,s}(f(x_{t,s}^{md})+\langle v_{t,s},x- x_{t,s}^{md}\rangle)
\\
&&
+\alpha_{3,s}(f(x_{t,s}^{md})+\langle v_{t,s},\tilde{x}_{s-1}- x_{t,s}^{md}\rangle+\frac{1}{2L_Q}\|\nabla f(x_{t,s}^{md})-v_{t,s}\|^2)
\\
&&
\alpha_{2,s}\langle v_{t,s},x_{t,s}-x \rangle + \frac{\alpha_{2,s}^2({L_Q}/{\alpha_{3,s}}+L_f)}{2}\|x_{t,s}-x_{t-1,s}\|^2_2
\end{eqnarray*}
In the third inequality,
we use the Cauchy-Schwartz inequality $2\langle a,b\rangle \le \|a\|^2 + \|b\|^2$.
According to the optimality of $x_{t,s}$,
we have
\[
v_{t,s} + \chi \theta_{s} A^T(Ax_{t-1,s}+By_{t-1,s}-c)+
(1-\chi)\theta_{s}A^T(Ax_{t,s}+B y_{t-1,s} -c) +A^T\lambda_{t-1,s} + \eta_{s} (x_{t,s}-x_{t-1,s})=0.
\]
If we denote $\hat{x}_{t,s} = \chi x_{t,s}+(1-\chi)x_{t-1,s}$,
we have
\begin{eqnarray*}
v_{t,s} &=& \eta_{s} (x_{t-1,s} - x_{t,s}) - A^T (\theta_{s}(A \hat{x}_{t,s} + B y_{t-1,s} -c )+ \lambda_{t-1,s})
\\
&=& \eta_{s} (x_{t-1,s} - x_{t,s})  - A^T(\frac{\theta_{s}}{\rho_{s}}(\lambda_{t,s}-\lambda_{t-1,s})
+\theta_{s}A(\hat{x}_{t,s}-x_{t,s})+\theta_{s}B(y_{t-1,s}-y_{t,s}))
\end{eqnarray*}
Since $\bar{L} \ge {L_Q}/{\alpha_{3,s}}+L_f$,
$\E (v_{t,s}) = \nabla f(x_{t,s}^{md})$
and $x_{t-1,s}^{ag}, x_{t,s}^{md},\tilde{x}_{s-1}$  is independent of $v_{t,s}$,
we have
\begin{eqnarray}\label{xrelation}
\E f(x_{t,s}^{ag})
&\le&
\alpha_{1,s}(f(x_{t,s}^{md})+\langle \nabla f(x_{t,s}^{md}),x_{t-1,s}^{ag}- x_{t,s}^{md}\rangle)
+\alpha_{2,s}(f(x_{t,s}^{md})+\langle \nabla f(x_{t,s}^{md}),x- x_{t,s}^{md}\rangle)
\nonumber\\
&&
+\alpha_{3,s}(f(x_{t,s}^{md})+\langle \nabla f(x_{t,s}^{md}),\tilde{x}_{s-1}- x_{t,s}^{md}\rangle+\frac{1}{2L_Q}\|\nabla f(x_{t,s}^{md})-v_{t,s}\|^2)+
\nonumber\\
&&\alpha_{2,s}\langle \eta_{s} (x_{t-1,s} - x_{t,s}),x_{t,s}-x \rangle  - (\theta_{s}(A \hat{x}_{t,s} + B y_{t-1,s} -c) + \lambda_{t-1,s}),A(x_{t,s}-x) \rangle +
\nonumber\\
&&\frac{\alpha_{2,s}^2\bar{L}}{2}\|x_{t,s}-x_{t-1,s}\|^2_2
\nonumber\\
&\le&\alpha_{1,s}f(x_{t-1,s}^{ag})+\alpha_{2,s}f(x)+
\alpha_{3,s}f(\tilde{x}_{s-1})+
\alpha_{2,s}\langle \eta_{s} (x_{t-1,s} - x_{t,s}) ,x_{t,s}-x \rangle-
\nonumber\\&&\alpha_{2,s}\langle \frac{\theta_{s}}{\rho_{s}}(\lambda_{t,s}-\lambda_{t-1,s})+\lambda_{t-1,s}
+\theta_{s}A(\hat{x}_{t,s}-x_{t,s})+\theta_{s}B(y_{t-1,s}-y_{t,s}),A(x_{t,s}-x) \rangle
\nonumber\\
&&+\frac{\alpha_{2,s}^2\bar{L}}{2}\|x_{t,s}-x_{t-1,s}\|^2_2
\end{eqnarray}
The second inequality is due to Lemma 1 and the convexity of $f(x)$.

According to the optimality of Line 12 in Algorithm~\ref{alg:asvrgadmm} and the convexity of $g(y)$,
we have
\begin{eqnarray}\label{yrelation}
g(y_{t,s})-g(y)&\le&\langle \partial g(y_{t,s}),y_{t,s}-y \rangle \nonumber\\
 &=&\langle \theta_{s} (Ax_{t,s}+By_{t,s}-c)+\lambda_{t-1,s},B(y-y_{t,s})\rangle.
\end{eqnarray}

According to the definition of gap function, we have
\begin{eqnarray}\label{Qrelation}
&&\E Q(w;w_{t,s}^{ag})-\alpha_{1,s}Q(w;w_{t-1,s}^{ag})-\alpha_{3,s}Q(w;\tilde{w}_{s-1})
\nonumber\\
&=&[f(x_{t,s}^{ag})+g(y_{t,s}^{ag})+\langle \lambda,Ax_{t,s}^{ag} + By_{t,s}^{ag} -c\rangle]
-
[f(x)+g(y)+\langle \lambda_{t,s}^{ag},Ax + By -c\rangle]
\nonumber\\
&&-\alpha_{1,s}([f(x_{t-1,s}^{ag})+g(y_{t-1,s}^{ag})+\langle \lambda,Ax_{t-1,s}^{ag} + By_{t-1,s}^{ag} -c\rangle]
-
[f(x)+g(y)+\langle \lambda_{t-1,s}^{ag},Ax + By -c\rangle])
\nonumber\\
&&-\alpha_{3,s}(
[f(\tilde{x}_{s-1})+g(\tilde{y}_{s-1})+\langle \lambda,A\tilde{x}_{s-1} + B\tilde{y}_{s-1} -c\rangle]
-
[f(x)+g(y)+\langle \tilde\lambda_{s-1},Ax + By -c\rangle])
\nonumber\\
&=&[f(x_{t,s}^{ag})-\alpha_{1,s}f(x_{t-1,s}^{ag})-\alpha_{2,s}f(x)
-\alpha_{3,s} f(\tilde{x}_{s-1})]+[g(y_{t,s}^{ag})-\alpha_{1,s}g(y_{t-1,s}^{ag})-\alpha_{2,s}g(y)
\nonumber\\
&&-\alpha_{3,s} g(\tilde{y}_{s-1})]
+\alpha_{2,s}\langle \lambda , Ax_{s,t}+By_{s,t}-c\rangle -\alpha_{2,s}\langle \lambda_{t,s} , Ax+By-c\rangle
\nonumber\\
&\le&
[f(x_{t,s}^{ag})-\alpha_{1,s}f(x_{t-1,s}^{ag})-\alpha_{2,s}f(x)
-\alpha_{3,s} f(\tilde{x}_{s-1})]+\alpha_{2,s}[g(y_{t,s})-g(y)]
\nonumber\\
&&+\alpha_{2,s}\langle \lambda , Ax_{s,t}+By_{s,t}-c\rangle -\alpha_{2,s}\langle \lambda_{t,s} , Ax+By-c\rangle
\end{eqnarray}
Combining (\ref{xrelation}),~(\ref{yrelation}) and~(\ref{Qrelation}),
we have
\begin{eqnarray}\label{iterQ}
&&\E~Q(w;w_{t,s}^{ag})-\alpha_{1,s,t}Q(w;w_{t-1,s}^{ag})-\alpha_{3,s,t}Q(w;\tilde{w}_{s-1})
\nonumber\\
&\le&\alpha_{2,s}[\langle \eta_{s} (x_{t-1,s} - x_{t,s}) ,x_{t,s}-x \rangle-
\nonumber\\
&&\langle \frac{\theta_{s}}{\rho_{s}}(\lambda_{t,s}-\lambda_{t-1,s})+\lambda_{t-1,s}
+\theta_{s}A(\hat{x}_{t,s}-x_{t,s})+\theta_{s}B(y_{t-1,s}-y_{t,s}),A(x_{t,s}-x) \rangle
\nonumber\\
&&+\frac{\alpha_{2,s}\bar{L}}{2}\|x_{t,s}-x_{t-1,s}\|^2_2
+\langle \theta_{s} (Ax_{t,s}+By_{t,s}-c)+\lambda_{t-1,s},B(y-y_{t,s})\rangle
\nonumber\\
&&+\langle \lambda , Ax_{s,t}+By_{s,t}-c\rangle -\langle \lambda_{t,s} , Ax+By-c\rangle]
\nonumber\\
&\le&\alpha_{2,s}[\langle \eta_{s} (x_{t-1,s} - x_{t,s}) ,x_{t,s}-x \rangle-
\nonumber\\
&&\langle \frac{\theta_{s}}{\rho_{s}}(\lambda_{t,s}-\lambda_{t-1,s})+\lambda_{t-1,s}
+\theta_{s}A(\hat{x}_{t,s}-x_{t,s})+\theta_{s}B(y_{t-1,s}-y_{t,s}),A(x_{t,s}-x) \rangle
\nonumber\\
&&+\frac{\alpha_{2,s}\bar{L}}{2}\|x_{t,s}-x_{t-1,s}\|^2_2
+\langle \theta_{s} (Ax_{t,s}+By_{t,s}-c)+\lambda_{t-1,s},B(y-y_{t,s})\rangle
\nonumber\\
&&+\langle \lambda - \lambda_{t,s} , Ax_{s,t}+By_{s,t}-c\rangle +\langle \lambda_{t,s} , Ax_{t,s}-Ax\rangle+\langle \lambda_{t,s},By_{t,s}-By\rangle]
\nonumber\\
&\le&\alpha_{2,s}[\langle \eta_{s} (x_{t-1,s} - x_{t,s}) ,x_{t,s}-x \rangle+\langle \lambda - \lambda_{t,s} , \frac{1}{\rho_{s}}{(\lambda_{t,s}-\lambda_{t-1,s})}\rangle-
\nonumber\\
&&\langle (\frac{\theta_{s}}{\rho_{s}}-1)(\lambda_{t,s}-\lambda_{t-1,s}),A(x_{t,s}-x)
\rangle+\langle (\frac{\theta_{s}}{\rho_{s}}-1)(\lambda_{t,s}-\lambda_{t-1,s}),B(y-y_{t,s})\rangle
\nonumber\\
&&-\theta_{s}\langle A(\hat{x}_{t,s}-x_{t,s}),A(x_{t,s}-x)
\rangle
-\theta_{s}\langle B(y_{t-1,s}-y_{t,s}),A(x_{t,s}-x) \rangle
\nonumber\\
&&+\frac{\alpha_{2,s}\bar{L}}{2}\|x_{t,s}-x_{t-1,s}\|^2_2
\nonumber\\
\end{eqnarray}

Since $2\langle a,b\rangle = \|a+b\|^2-\|a\|^2-\|b\|^2$,we have
\begin{equation}\label{iterx}
\langle x_{t-1,s}-x_{t,s},x_{t,s}-x\rangle =\frac{1}{2}( \|x_{t-1,s}-x\|^2 -\|x_{t,s}-x\|^2 - \|x_{t-1,s}-x_{t,s}\|^2)
\end{equation}
\begin{equation}\label{iterlambda}
\langle \lambda-\lambda_{t,s},\lambda_{t,s}-\lambda_{t-1,s} \rangle = \frac{1}{2}(\|\lambda_{t-1,s}-\lambda\|^2 - \|\lambda_{t,s}-\lambda\|^2 - \|\lambda_{t,s}-\lambda_{t-1,s}\|^2).
\end{equation}
Since $B(y-y_{t,s}) = (Ax + By -c )-\frac{1}{\rho_{s}}(\lambda_{t,s}-\lambda_{t-1,s})+A(x_{t,s}-x)$,
we have
\begin{eqnarray}\label{itertheta}
&&-\langle (\frac{\theta_{s}}{\rho_{s}}-1)(\lambda_{t,s}-\lambda_{t-1,s}),A(x_{t,s}-x)
\rangle+\langle (\frac{\theta_{s}}{\rho_{s}}-1)(\lambda_{t,s}-\lambda_{t-1,s}),B(y-y_{t,s})\rangle
\nonumber\\
&=&-(\frac{\theta_{s}}{\rho_{s}}-1)(\lambda_{t,s}-\lambda_{t-1,s}),A(x_{t,s}-x)
\rangle+\langle (\frac{\theta_{s}}{\rho_{s}}-1)(\lambda_{t,s}-\lambda_{t-1,s}),(Ax + By -c )\rangle
\nonumber\\
&&+\langle (\frac{\theta_{s}}{\rho_{s}}-1)(\lambda_{t,s}-\lambda_{t-1,s}),-\frac{1}{\rho_{t,s}}(\lambda_{t,s}-\lambda_{t-1,s})+A(x_{t,s}-x)\rangle
\nonumber\\
&=&-\frac{\theta_{s}-\rho_{s}}{\rho_{s}}\|\lambda_{t,s}-\lambda_{t-1,s}\|^2+
\langle (\frac{\theta_{s}}{\rho_{s}}-1)(\lambda_{t,s}-\lambda_{t-1,s}),(Ax + By -c )
\nonumber\\
\end{eqnarray}
We also have
\begin{eqnarray}\label{iterrho}
&&-\theta_{s}\langle A(\hat{x}_{t,s}-x_{t,s}),A(x_{t,s}-x)
\rangle
+\theta_{s}\langle B(y_{t,s}-y_{t-1,s}),A(x_{t,s}-x) \rangle
\nonumber\\
&=&\frac{\chi\theta_{s}}{2}(\|A(x_{t-1,s}-x_{t,s})\|^2+\|A(x_{t-1,s}-x)\|^2
-\|A(x_{t,s}-x)\|^2)
\nonumber\\
&&+\frac{\theta_{s}}{2}(\|Ax_{t,s}+By_{t,s}-c\|^2-\|Ax+By_{t,s}-c\|^2                                                     \nonumber\\
&&+\|Ax+By_{t-1,s}-c\|^2-\|Ax_{t,s}+By_{t-1,s}-c\|^2)
\nonumber\\
&\le&\frac{\chi\theta_{s}}{2}(\|A(x_{t-1,s}-x)\|^2
-\|A(x_{t,s}-x)\|^2)+\frac{\chi\theta_{s}\|A\|^2_2}{2}\|(x_{t-1,s}-x_{t,s})\|^2
\nonumber\\
&&+\frac{\theta_{s}}{2\rho_{s}^2}\|\lambda_{t,s}-\lambda_{t-1,s}\|^2+\frac{\theta_{s}}{2}(\|Ax+By_{t-1,s}-c\|^2-\|Ax+By_{t,s}-c\|^2                                                     )\nonumber\\
&&-\frac{\theta_{s}}{2}\|Ax_{t,s}+By_{t-1,s}-c\|^2
\nonumber\\
\end{eqnarray}

Substituting (\ref{iterx}), (\ref{iterlambda}), (\ref{itertheta}) and (\ref{iterrho}) into
(\ref{iterQ}), we have
\begin{eqnarray}
&&\E Q(w;w_{t,s}^{ag})-\alpha_{1,t,s}Q(w;w_{t-1,s}^{ag})-\alpha_{3,s,t}Q(w;\tilde{w}_{s-1})
\nonumber\\
&\le&\alpha_{2,s}[\frac{\eta_{s}}{2}(\|x_{t-1,s}-x\|^2-\|x_{t,s}-x\|^2)+\frac{1}{2\rho_{s}}(\|\lambda_{t-1,s}-\lambda\|^2-\|\lambda_{t,s}-\lambda)\|^2
\nonumber\\
&&+\frac{\chi\theta_{s}}{2}(\|A(x_{t,s}-x)\|^2-\|A(x_{t-1,s}-x)\|^2)+\frac{\theta_{s}}{2}(\|Ax+By_{t-1,s}-c\|^2-
\nonumber\\
&&\|Ax+By_{t,s}-c\|^2)+\langle (\frac{\theta_{s}}{\rho_{s}}-1)(\lambda_{t,s}-\lambda_{t-1,s}),(Ax + By -c )\rangle
\nonumber\\
&&+\frac{\rho_{s}-\theta_{s}}{2\rho_{s}^2}\|\lambda_{t,s}-\lambda_{t-1,s}\|^2
-\frac{\theta_{s}}{2}\|Ax_{t,s}+By_{t-1,s}-c\|^2\nonumber\\
&&-\frac{\eta_{s}-\bar{L}\alpha_{2,t}-\chi\theta_{s}\|A\|^2_2}{2}
\|x_{t,s}-x_{t-1,s}\|]
\end{eqnarray}
Since $Ax^*+By^*-c=0$,
if we choose $\theta_{s}\ge\rho_{s}$ and $\eta_{s}\ge\bar{L}\alpha_{2,s}+\chi\theta_{s}\|A\|^2_2$,
we have
\begin{eqnarray}\label{Qloop}
&&\E Q(x^*,y^*,\lambda;w_{t,s}^{ag})-\alpha_{1,s}Q(x^*,y^*,\lambda;w_{t-1,s}^{ag})-\alpha_{3,t}Q(x^*,y^*,\lambda;\tilde{w}_{s-1})
\nonumber\\
&\le&\alpha_{2,t}[\frac{\eta_{s}}{2}(\|x_{t-1,s}-x^*\|^2-\|x_{t,s}-x^*\|^2)+\frac{1}{2\rho_{t,s}}(\|\lambda_{t-1,s}-\lambda\|^2-\|\lambda_{t,s}-\lambda\|^2)
\nonumber\\
&&+\frac{\chi\theta_{t,s}}{2}(\|A(x_{t,s}-x^*)\|^2-\|A(x_{t-1,s}-x^*)\|^2)+\frac{\theta_{t,s}}{2}(\|Ax^*+By_{t-1,s}-c\|^2-
\nonumber\\
&&\|Ax^*+By_{t,s}-c\|^2)]
\end{eqnarray}
\end{proof}

\subsection{Proof of Lemma 3}
\begin{proof}
In the $s-th$ outer iteration, if we make $\theta_{s}= \beta_1\alpha_{2,s}$, $\rho_{s}=\frac{\beta_2}{\alpha_{2,s}}$ ,$\eta_{s}=(\bar{L}+\chi\beta_1\|A\|_2^2)\alpha_{2,s}$
then according to Lemma 2 we have
\begin{eqnarray}
&&\frac{1}{\alpha_{2,s}^2}\E Q(x^*,y^*,\lambda;w_{t,s}^{ag})-\frac{\alpha_{1,s}}{\alpha^2_{2,s}}Q(x^*,y^*,\lambda;w_{t-1,s}^{ag})-\frac{\alpha_{3,s}}{\alpha_{2,s}^2}Q(x^*,y^*,\lambda;\tilde{w}_{s-1})
\nonumber\\
&\le&\frac{\bar{L}+\chi\beta_1\|A\|_2^2}{2}(\|x_{t-1,s}-x^*\|^2-\|x_{t,s}-x^*\|^2)+\frac{\beta_2}{2}(\|\lambda_{t-1,s}-\lambda\|^2-\|\lambda_{t,s}-\lambda\|^2)
\nonumber\\
&&+\frac{\chi\beta_1}{2}(\|A(x_{t,s}-x^*)\|^2-\|A(x_{t-1,s}-x^*)\|^2)+\frac{\beta_1}{2}(\|Ax^*+By_{t-1,s}-c\|^2-
\nonumber\\
&&\|Ax^*+By_{t,s}-c\|^2)]
\end{eqnarray}

Adding up t from $1$ to $m$ in the $s-th$ outer iteration,
we have
\begin{eqnarray}
&&\frac{1}{\alpha_{2,s}^2}\E Q(x^*,y^*,\lambda;w_{m,s}^{ag})+\sum_{t=1}^{m-1}\frac{1-\alpha_{1,s}}{\alpha^2_{2,s}}\E Q(x^*,y^*,\lambda;w_{t,s}^{ag})
\nonumber\\
&\le&\frac{\alpha_{1,s}}{\alpha_{2,s}^2}\E Q(x^*,y^*,\lambda;w_{0,s}^{ag})+\frac{\alpha_{3,s}m}{\alpha_{2,s}^2}\E Q(x^*,y^*,\lambda;\tilde{w}_{s-1})
\nonumber\\
&&+\frac{\bar{L}+\chi\beta_1\|A\|_2^2}{2}(\|x_{0,s}-x^*\|^2-\|x_{m,s}-x^*\|^2)+\frac{1}{2\beta_2}(\|\lambda_{0,s}-\lambda\|^2-\|\lambda_{m,s}-\lambda\|^2)
\nonumber\\
&&+\frac{\chi\beta_1}{2}(\|A(x_{m,s}-x^*)\|^2-\|A(x_{0,s}-x^*)\|^2)+\frac{\beta_1}{2}(\|Ax^*+By_{0,s}-c\|^2-
\nonumber\\
&&\|Ax^*+By_{m,s}-c\|^2)]
\end{eqnarray}

If $\frac{1}{\alpha_{2,s}^2}=\frac{1-\alpha_{2,s+1}}{\alpha_{2,s+1}^2}$ and $\frac{1-\alpha_{1,s}}{\alpha^2_{2,s}}=\frac{\alpha_{3,s+1}}{\alpha_{2,s+1}^2}$,
we have
\begin{eqnarray}
&&\frac{1-\alpha_{2,s+1}}{\alpha_{2,s+1}^2}\E Q(x^*,y^*,\lambda;w_{m,s}^{ag})+\sum_{t=1}^{m-1}\frac{\alpha_{3,s+1}}{\alpha_{2,s+1}^2}\E Q(x^*,y^*,\lambda;w_{t,s}^{ag})
\nonumber\\
&\le&\frac{\alpha_{1,s}}{\alpha_{2,s}^2}\E Q(x^*,y^*,\lambda;w_{0,s}^{ag})+\frac{\alpha_{3,s}m}{\alpha_{2,s}^2}\E Q(x^*,y^*,\lambda;\tilde{w}_{s-1})
\nonumber\\
&&+\frac{\bar{L}+\chi\beta_1\|A\|_2^2}{2}(\|x_{0,s}-x^*\|^2-\|x_{m,s}-x^*\|^2)+\frac{1}{2\beta_2}(\|\lambda_{0,s}-\lambda\|^2-\|\lambda_{m,s}-\lambda\|^2)
\nonumber\\
&&+\frac{\chi\beta_1}{2}(\|A(x_{m,s}-x^*)\|^2-\|A(x_{0,s}-x^*)\|^2)+\frac{\beta_1}{2}(\|Ax^*+By_{0,s}-c\|^2-
\nonumber\\
&&\|Ax^*+By_{m,s}-c\|^2)]
\end{eqnarray}

According to the convexity of $f(x)$ and $g(y)$ and the linearity of $\lambda (Ax+By-c)$ (w.r.t. $x \& y$),
we have
\begin{eqnarray*}
Q(x^*,y^*,\lambda;\tilde{w}_{s})  &=&f(\tilde{x}_{s})-f(x^*)+g(\tilde{y}_{s})-g(y^*)+
\langle\lambda,A\tilde{x}_s+B\tilde{y}_s-c\rangle
\\
&\le&\frac{1}{m}\sum_{t=1}^m (f({x}_{t,s}^{ag})-f(x^*)+g(y_{t,s}^{ag})-g(y^*)+
\langle\lambda,A{x}_{t,s}^{ag}+By_{t,s}^{ag}-c\rangle)
\\
&=&\frac{1}{m}\sum_{t=1}^mQ(x^*,y^*,\lambda;w_{t,s}^{ag})
\end{eqnarray*}

Since $x_{0,s} = x_{m,s-1}$, $y_{0,s} = y_{m,s-1}$ and $\lambda_{0,s} = \lambda_{m,s-1}$,
we have
\begin{eqnarray}
&&\frac{\alpha_{1,s+1}}{\alpha_{2,s+1}^2}\E Q(x^*,y^*,\lambda;w_{m,s}^{ag})+\frac{\alpha_{3,s+1}m}{\alpha_{2,s+1}^2}\E Q(x^*,y^*,\lambda;\tilde{w}_{s})
\nonumber\\
&\le&\frac{\alpha_{1,s}}{\alpha_{2,s}^2}\E Q(x^*,y^*,\lambda;w_{m,s-1}^{ag})+\frac{\alpha_{3,s}m}{\alpha_{2,s}^2}\E Q(x^*,y^*,\lambda;\tilde{w}_{s-1})
\nonumber\\
&&+\frac{\bar{L}+\chi\beta_1\|A\|_2^2}{2}(\|x_{m,s-1}-x^*\|^2-\|x_{m,s}-x^*\|^2)+\frac{1}{2\beta_2}(\|\lambda_{m,s-1}-\lambda\|^2-\|\lambda_{m,s}-\lambda\|^2)
\nonumber\\
&&+\frac{\chi\beta_1}{2}(\|A(x_{m,s}-x^*)\|^2-\|A(x_{m,s-1}-x^*)\|^2)+\frac{\beta_1}{2}(\|Ax^*+By_{m,s-1}-c\|^2-
\nonumber\\
&&\|Ax^*+By_{m,s}-c\|^2)]
\end{eqnarray}

Summing s from 1 to N, we have
\begin{eqnarray}
&&\frac{\alpha_{1,N+1}}{\alpha_{2,N+1}^2}\E Q(x^*,y^*,\lambda;w_{m,N}^{ag})+\frac{\alpha_{3,N+1}m}{\alpha_{2,N+1}^2} \E Q(x^*,y^*,\lambda;\tilde{w}_{N})
\nonumber\\
&\le&\frac{\alpha_{1,1}}{\alpha_{2,1}^2}Q(x^*,y^*,\lambda;w_{m,0}^{ag})+\frac{\alpha_{3,1}m}{\alpha_{2,1}^2}Q(x^*,y^*,\lambda;\tilde{w}_{0})
\nonumber\\
&&+\frac{\bar{L}+\chi\beta_1\|A\|_2^2}{2}(\|x_{m,0}-x^*\|^2-\|x_{m,N}-x^*\|^2)+\frac{1}{2\beta_2}(\|\lambda_{m,0}-\lambda\|^2-\|\lambda_{m,N}-\lambda\|^2)
\nonumber\\
&&+\frac{\chi\beta_1}{2}(\|A(x_{m,N}-x^*)\|^2-\|A(x_{m,0}-x^*)\|^2)+\frac{\beta_1}{2}(\|Ax^*+By_{m,0}-c\|^2-
\nonumber\\
&&\|Ax^*+By_{m,N}-c\|^2)]
\end{eqnarray}

According to the convexity of $Q(x^*,y^*,\lambda;\cdot)$, if we set $\hat{w}_N = \frac{\alpha_{1,N+1}}{\alpha_{1,N+1}+\alpha_{3,N+1}m}w_{m,N}^{ag}+\frac{\alpha_{3,N+1}m}{\alpha_{1,N+1}+\alpha_{3,N+1}m}\tilde{w}_{N}$
,
we have
\begin{eqnarray}\label{finalrelation}
&&\frac{\alpha_{1,N+1}+\alpha_{3,N+1}m}{\alpha_{2,N+1}^2}\E Q(x^*,y^*,\lambda;\hat{w}_{N})
\nonumber\\
&\le&\frac{\alpha_{1,1}}{\alpha_{2,1}^2} Q(x^*,y^*,\lambda;w_{m,0}^{ag})+\frac{\alpha_{3,1}m}{\alpha_{2,1}^2}Q(x^*,y^*,\lambda;\tilde{w}_{0})
+\frac{\bar{L}+\chi\beta_1\|A\|_2^2}{2}\|x_{m,0}-x^*\|^2\nonumber\\
&&+\frac{1}{2\beta_2}\|\lambda_{m,0}-\lambda\|^2
+\frac{\chi\beta_1}{2}\|A(x_{m,N}-x^*)\|^2+\frac{\beta_1}{2}\|Ax^*+By_{m,0}-c\|^2
\end{eqnarray}

The above inequality is true for all $\lambda$,
hence it also holds in the ball $\B_0 = \{\lambda:\|\lambda\|_2 \le \gamma\}$,
it follows that
\begin{eqnarray}
\max_{\lambda\in\B_0} Q(x^*,y^*,\lambda;\hat{x},\hat{y},\hat{\lambda})
&=& \max_{\lambda\in\B_0}\{f(\hat{x})+g(\hat{y}) - f(x^*) - g(y^*)+\langle \lambda,A\hat{x}+B\hat{y}-c\rangle\}
\nonumber\\
&=&f(\hat{x})+g(\hat{y}) - f(x^*) - g(y^*)+\gamma\|A\hat{x}+B\hat{y}-c\|^2
\end{eqnarray}
If we make both side of inequality (\ref{finalrelation}) the max of $\lambda \in \B_0$, then we have
\begin{eqnarray}
&&\E [f(\hat{x}_{N})+g(\hat{y}_{N}) - f(x^*) - g(y^*)+\gamma\|A\hat{x}_N+B\hat{y}_N-c\|^2]
\nonumber\\
&\le&\frac{\alpha_{2,N+1}^2}{\alpha_{1,N+1}+m\alpha_{3,N+1}}[\frac{1}{\alpha_{2,1}^2}(f(\tilde{x}_0)+g(\tilde{y}_0) - f(x^*) - g(y^*))
+\frac{\bar{L}+\chi\beta_1\|A\|_2^2}{2}\|x_{m,0}-x^*\|^2\nonumber\\
&&+\max_{\lambda\in\B_0}\frac{1}{2\beta_2}\|\lambda_{m,0}-\lambda\|^2
+\frac{\chi\beta_1}{2}\|A(x_{m,N}-x^*)\|^2+\frac{\beta_1}{2}\|By^*+By_{m,0}\|^2]
\nonumber \\
&\le&\frac{\alpha_{2,N+1}^2}{\alpha_{1,N+1}+m\alpha_{3,N+1}}[\frac{1+m\alpha_{3,1}}{\alpha_{2,1}^2}(f(\tilde{x}_0)+g(\tilde{y}_0) - f(x^*) - g(y^*))
+\frac{\bar{L}+\chi\beta_1\|A\|_2^2}{2}D^2_{x^*}\nonumber\\
&&+\frac{1}{2\beta_2}\gamma^2
+\frac{\chi\beta_1}{2}D^2_{A,X}+\frac{\beta_1}{2}D^2_{y^*,B}]
\end{eqnarray},
where the first inequality is due to the initialization $A \tilde{x}_0 +B\tilde{y}_0  -c = 0$ and $Ax^* + By^* - c =0$.
\end{proof}

\end{document}